\documentclass[11pt,a4paper,reqno]{amsart}
\usepackage{amssymb,amsfonts,euscript,amscd,amsmath,amsthm,amsopn}
\usepackage{color, graphicx, bbm, url}
\usepackage[usenames,dvipsnames]{xcolor}
\usepackage[utf8]{inputenc}
\usepackage[position=top]{subfig}
\usepackage{standalone}
\usepackage{xspace}
\usepackage{gensymb}
\usepackage{booktabs,tabularx}
\usepackage{soul}

\marginparwidth=0.75in
\usepackage{tikz}
\usetikzlibrary{calc}

\makeatletter
\tikzset{
  xyz frame/.code n args={3}{%
    \begingroup
    \tikz@scan@one@point\pgfutil@firstofone#1\relax
    \pgf@xa=\pgf@x
    \pgf@ya=\pgf@y
    \tikz@scan@one@point\pgfutil@firstofone#2\relax
    \pgf@xb=\pgf@x
    \pgf@yb=\pgf@y
    \tikz@scan@one@point\pgfutil@firstofone#3\relax
    \edef\tikz@marshall{\noexpand\endgroup\noexpand\pgfsetxvec{\noexpand\pgfpoint{\the\pgf@xa}{\the\pgf@ya}}%
      \noexpand\pgfsetyvec{\noexpand\pgfpoint{\the\pgf@xb}{\the\pgf@yb}}%
      \noexpand\pgfsetzvec{\noexpand\pgfpoint{\the\pgf@x}{\the\pgf@y}}}%
    \tikz@marshall
  },
  on layer/.code={
    \pgfonlayer{#1}\begingroup
    \aftergroup\endpgfonlayer
    \aftergroup\endgroup
  },
}
\makeatother

\makeatletter
\newcommand*{\inlineequation}[2][]{%
  \begingroup
    \refstepcounter{equation}%
    \ifx\\#1\\%
    \else
      \label{#1}%
    \fi
    \relpenalty=10000 %
    \binoppenalty=10000 %
    \ensuremath{%
      #2%
    }%
    ~\@eqnnum
  \endgroup
}
\makeatother

\pgfdeclarelayer{behind}
\pgfdeclarelayer{in front}
\pgfsetlayers{behind,main,in front}

\newcommand{\kay}{\textit{k}}

\usepackage{enumitem}
\newlist{MyIndentedList}{itemize}{4}
\setlist[MyIndentedList,1]{%
    label={},
    noitemsep,
    leftmargin=0pt,
    }
\setlist[MyIndentedList]{%
    label={},
    noitemsep,
    }

\newcommand{\niton}{\not\owns}

\usepackage[bookmarks, bookmarksdepth=2, colorlinks=true, linkcolor=blue, citecolor=blue, urlcolor=blue]{hyperref}
\newcommand{\arxiv}[1]{\href{http://arxiv.org/abs/#1}{{\tt arXiv:#1}}}

 \newcommand\blfootnote[1]{%
  \begingroup
  \renewcommand\thefootnote{}\footnote{#1}%
  \addtocounter{footnote}{-1}%
  \endgroup
  }

\newcommand{\QQ}{\mathbb{Q}}
\newcommand{\RR}{\mathbb{R}}
\newcommand{\CC}{\mathbb{C}}
\newcommand{\ZZ}{\mathbb{Z}}
\newcommand{\PP}{\mathbb{P}}

\theoremstyle{plain}
\newtheorem{thm}{Theorem}
\newtheorem{lem}[thm]{Lemma}
\newtheorem{prop}[thm]{Proposition}

\newtheorem{defn}[thm]{Definition}

\newtheorem{defnprop}[thm]{Definition/Proposition}

\theoremstyle{definition}
\newtheorem{exmp}[thm]{Example}

\theoremstyle{remark}
\newtheorem*{rem}{Remark}

\title{Minimal Problems for the \linebreak Calibrated Trifocal Variety}
\author{Joe Kileel}

\begin{document} 
 
\begin{abstract} \noindent
We determine the algebraic degree 
of minimal problems
for the calibrated trifocal variety 
in computer vision.  We rely on
numerical algebraic geometry
and the homotopy continuation 
software \texttt{Bertini}.

\end{abstract}

\maketitle

\section{Introduction} \label{sec:1}

\blfootnote{\textit{2010 Mathematics Subject Classification.} 14M20, 14Q15, 14N99, 15A69, 65H20, 68T45.}
\blfootnote{\textit{Key words and phrases.} calibrated trifocal variety, minimal problems, numerical algebraic geometry.} 

In computer vision, one fundamental task is \textit{3D reconstruction}: the
recovery of three-dimensional scene geometry
from two-dimensional images.   
In 1981, Fischler and Bolles proposed a methodology 
for \textit{3D} reconstruction that is robust to outliers in image data \cite{FB}.
This is known as Random Sampling Consensus (RANSAC) and 
it is a paradigm in vision today \cite{ASSSS}.
RANSAC consists of three steps.
To compute a piece of the \textit{3D} scene:

\begin{itemize}
\item Points, lines and other features
that are images of the same source are detected in the photos.
These matches are the \textit{image data}.   
\item  A minimal sample of image data is randomly
selected.  \textit{Minimal} means that only a positive finite number of \textit{3D} geometries are exactly consistent with the sample. Those \textit{3D} geometries are computed.
\item  To each computed
\textit{3D} geometry, the rest of the image data is compared.  If one is approximately consistent with enough of 
the image data, it is kept.  Else, the second step is repeated with a new sample.
\end{itemize}

Computing the finitely many \textit{3D} geometries in the second step is called a \textit{minimal
problem}.  
Typically, it is done by solving a corresponding zero-dimensional polynomial
system, with coefficients that are functions of the sampled image data \cite{Kuk}.
Since this step is carried out
thousands of times in a full reconstruction, it is necessary to 
design efficient, specialized solvers.  
One of the most used \textit{minimal solvers} in vision is Nist\'{e}r's \cite{Nis}, based on Gr\"{o}bner bases, 
to recover the relative 
position of two calibrated cameras.

The concern of this paper is the recovery of the relative
position of \textit{three} calibrated cameras from image data.
To our knowledge, no satisfactory solution to this basic problem
exists in the literature.  
Our main result is the determination of 
the \textit{algebraic degree}
of $66$ minimal problems for the recovery of three calibrated cameras;
in other words, we find the generic number of complex solutions (see Theorem \ref{thm:main}).   
The solution sets for particular random instances 
are available at this project's computational webpage:

\medskip
{\small \url{https://math.berkeley.edu/~jkileel/CalibratedMinimalProblems.html}}.
\medskip

As a by-product, we can derive minimal solvers
for each case.  
Our techniques come from \textit{numerical algebraic geometry} \cite{SW},
and we rely on the homotopy continuation software \texttt{Bertini} \cite{BHSWSoftware}.  
This implies that our results are correct only with very high probability; in ideal arithmetic, with
probability $1$.  Mathematically, the main object in this paper
is a particular projective algebraic variety $\mathcal{T}_\text{cal}$, 
which is a convenient moduli space for the relative position of three calibrated cameras.
This variety is $11$-dimensional, degree $4912$ inside the projective space $\PP^{26}$ of $3 \times 3 \times 3$ tensors (see Theorem \ref{thm:4912}).
We call it the \textit{calibrated trifocal variety}.
Theorem \ref{thm:codim} formulates our minimal problems as slicing $\mathcal{T}_\text{cal}$ by special linear subspaces of $\PP^{26}$.

The rest of this paper is organized as follows.  In Section \ref{sec:2}, we make our minimal problems mathematically precise
and we state Theorem \ref{thm:main}.  
In Section \ref{sec:3}, we examine image correspondences using multi-view varieties and then trifocal tensors \cite[Chapter 15]{HZ}.
In Section \ref{sec:4}, we prove that trifocal tensors and camera configurations are equivalent.
In Section \ref{sec:5}, we introduce the calibrated trifocal variety $\mathcal{T}_\text{cal}$ and prove several useful facts.
Finally, in Section \ref{sec:6},
we present a computational
proof of the main result Theorem \ref{thm:main}.

\bigskip

{\footnotesize{\noindent \textbf{Acknowledgements.}  I thank my advisor Bernd Sturmfels.  
I also thank Jonathan Hauenstein for
help with \texttt{Bertini} and numerical algebraic geometry. I benefitted from technical conversations
with Justin Chen, Luke Oeding, Kristian Ranestad and Jose Rodriguez. 
}}

\section{Statement of Main Result} \label{sec:2}

We begin by giving several definitions.  Throughout this paper,
we work with the standard camera model of the projective camera \cite[Section 6.2]{HZ}.

\begin{defn}
A (projective) \textup{\textbf{camera}} is a full rank $3 \times 4$ matrix in $\CC^{3 \times 4}$ defined up to multiplication by a nonzero scalar.
\end{defn}

Thus, a camera corresponds to a linear projection $\PP^3 \dashrightarrow \PP^2$.
The \textbf{center} of a camera $A$ is the point $\textup{ker}(A) \in \PP^3$.  A camera is \textbf{real} if $A \in \RR^{3 \times 4}$.

\begin{defn}
A \textup{\textbf{calibrated camera}} is a $3 \times 4$ matrix in $\CC^{3 \times 4}$ whose left $3 \times 3$ submatrix is in the special orthogonal group $\textup{SO}(3, \CC)$.
\end{defn}

Real calibrated cameras have the interpretation of cameras with known and normalized 
\textit{internal parameters} (e.g. focal length) \cite[Subsection 6.2.4]{HZ}.  
In practical situations, this information can be available during \textit{3D} reconstruction.
Note that calibration of a camera is preserved by right multiplication by elements of the following subgroup of $\textup{GL}(4,\CC)$:
$$
\mathcal{G} := \{g \in \CC^{4 \times 4} \, | \, (g_{ij})_{\tiny{{1 \leq i, j \leq 3}}} \in \textup{SO}(3, \CC), \, g_{41} = g_{42} = g_{43} = 0 \text{ and } g_{44} \neq 0\}.
$$

\noindent Elements in $\mathcal{G}$ act on $\mathbb{A}^3 \subset \PP^3$ as composites of rotations, translations and central dilations.  
In the calibrated case of \textit{3D} reconstruction, one aims to recover camera positions (and afterwards
the \textit{3D} scene) up to those motions, since recovery of absolute positions is not possible from image data alone.

\begin{defn}
A \textup{\textbf{configuration}} of three calibrated cameras is an orbit of the action of the group $\mathcal{G}$ above
on the set:
$$ \{(A, B, C) \, | \, A, B, C \textup{ are calibrated cameras} \} $$
via simultaneous right multiplication.
\end{defn}

By abuse of notation, we will call $(A,B,C)$ a calibrated camera configuration, 
instead of always denoting the orbit containing $(A,B,C)$.

As mentioned in Section \ref{sec:1}, the image data used in \textit{3D} reconstruction 
typically are points and lines in the photos that match.  This is made precise as follows.
Call elements of $\PP^2$ \textbf{image points}, and elements of the dual projective plane
$(\PP^2)^{\vee}$ \textbf{image lines}.
An element of $(\PP^2 \sqcup (\PP^2)^{\vee})^{\times 3}$ is a \textbf{point/line image correspondence}.
For example, an element of $\PP^2 \times \PP^2 \times (\PP^2)^{\vee}$ is called a point-point-line image correspondence, denoted \textit{PPL}.

\begin{defn}\label{defn:consistent}
A calibrated camera configuration $(A,B,C)$ is \textup{\textbf{consistent}} with a given point/line image correspondence
if there exist a point in $\PP^3$ and a line in $\PP^3$ containing it such that are such that $(A,B,C)$ respectively map these to the given points and lines in $\PP^2$.  
\end{defn}

For example, explicitly, a configuration $(A,B,C)$ is consistent with 
a given point-point-line image correspondence $(x, x', \ell'') \in \PP^2 \times \PP^2 \times (\PP^2)^{\vee}$ if  
there exist $(X, L) \in \PP^3 \times \textup{Gr}(\PP^1, \PP^3)$ with $X \in L$
such that $AX = x, BX = x',$ and $CL = \ell''$.  In particular, this implies that
$X \neq \textup{ker}(A), \textup{ker}(B)$ and $\textup{ker}(C) \notin L $.
We say that a configuration $(A,B,C)$ is consistent with a set of point/line correspondences
if it is consistent with each correspondence.

We give a numerical example to illustrate Theorem \ref{thm:main} on the next page:

\begin{exmp}  \label{exmp:160}
Given the following set of real, random correspondences:\footnote{\label{footnote:precision}For ease of presentation, double precision floating point numbers are truncated here.}
{\footnotesize{
\begin{align*}
&PPP:  \,\, \begin{bmatrix} 0.6132 \\ 0.8549 \\ 0.5979 \end{bmatrix}, \, \begin{bmatrix} 0.4599 \\ 0.5713 \\ 0.1812 \end{bmatrix}, \, \begin{bmatrix} 0.6863 \\ 0.4508 \\ 0.1834 \end{bmatrix} \,\,\,\,\,
& \,\,\,\,\, PPL:  \,\, \begin{bmatrix} 0.6251 \\ 0.9248 \\ 0.9849 \end{bmatrix}, \, \begin{bmatrix} 0.3232 \\ 0.5453 \\ 0.6941 \end{bmatrix}, \, \begin{bmatrix} 0.3646 \\ 0.1497 \\ 0.1364 \end{bmatrix} \\
&PPL: \,\, \begin{bmatrix} 0.4970 \\ 0.6532 \\ 0.8429 \end{bmatrix}, \, \begin{bmatrix} 0.5405 \\ 0.8342 \\ 0.6734 \end{bmatrix}, \, \begin{bmatrix} 0.2692 \\ 0.8861 \\ 0.1333 \end{bmatrix} \,\,\,\,\, 
& \,\,\,\,\, PPL: \,\, \begin{bmatrix} 0.2896 \\ 0.6909 \\ 0.4914 \end{bmatrix}, \, \begin{bmatrix} 0.6898 \\ 0.9855 \\ 0.6777 \end{bmatrix}, \, \begin{bmatrix} 0.6519 \\ 0.8469 \\ 0.6855 \end{bmatrix} \\
&PPL: \,\, \begin{bmatrix} 0.8933 \\ 0.3375 \\ 0.1054 \end{bmatrix}, \, \begin{bmatrix} 0.7062 \\ 0.6669 \\ 0.7141 \end{bmatrix}, \, \begin{bmatrix} 0.3328 \\ 0.8228 \\ 0.6781 \end{bmatrix}. &
\end{align*}
}}
\noindent In the notation of Theorem \ref{thm:main}, this is a generic instance of the minimal problem `$1 PPP + 4 \textit{PPL}$'.  
Up to the action of $\mathcal{G}$, there are only a positive finite number of three calibrated
cameras
that are exactly consistent with this image data, namely 160 complex configurations.   
For this instance, it turns out that 18 of those configurations are real.  For example,
one is:
\vspace{-0.3cm}

$$
{\footnotesize{A = }}{\tiny{\begin{bmatrix} 1 & 0 & 0 & 0 \\ 0 & 1 & 0 & 0 \\ 0 & 0 & 1 & 0 \end{bmatrix}}}, \,
{\footnotesize{B = }}{\tiny{\begin{bmatrix} -0.22 & 0.95 & -0.18 & 1 \\ 0.96 & 0.24 & 0.08 & 1.44 \\ -0.12 & 0.15 & 0.97 & 0.97 \end{bmatrix}}}, \,
{\footnotesize{C = }}{\tiny{\begin{bmatrix} 0.17 & 0.94 & -0.28 & 1.41 \\ -0.95 & 0.22 & 0.18 & -0.13 \\ -0.24 & -0.23 & -0.94 & -1.16 \end{bmatrix}}}. 
\vspace{0.15cm}
$$
In a RANSAC run for \textit{3D} reconstruction, the image data above is identified by feature detection 
software such as \texttt{SIFT} \cite{Low}.  Also, only the real configurations are compared for agreement 
with further image data.
\end{exmp}

In Example \ref{exmp:160} above, 160 is the \textbf{algebraic degree} of the minimal problem `$1 PPP + 4 \textit{PPL}$'.
This means that for correspondences in a nonempty Zariski open (hence measure 1) subset of 
$(\PP^2 \times \PP^2 \times \PP^2) \times (\PP^2 \times \PP^2 \times (\PP^2)^{\vee})^{\times 4}$,
there are 160 consistent complex configurations.  Given generic real correspondences,
the number of real configurations varies, but 160 is an upper bound.  

The cases in Theorem \ref{thm:main} admit a uniform treatment that we give below.

\newpage 

\begin{thm} \label{thm:main}
The rows of the following table display the algebraic degree for $66$ minimal problems
across three calibrated views.
Given generic point/line image correspondences
in the amount specified by the entries in the first five columns,
then the number of calibrated camera configurations over $\CC$ 
that are consistent with those correspondences equals the entry in the sixth column. 

{\tiny{
\begin{table}[h!]
\fontsize{6.65}{7.65}\selectfont
\centering
\begin{tabular}{||c | c | c | c | c || c||} 
 \hline
$\#$PPP & $\#$PPL & $\#$PLP & $\#$LLL & $\#$PLL & $\#$configurations\\ [0.5ex] 
 \hline\hline
3 & 1 & 0 & 0 & 0 & 272 \\
\hline
3 & 0 & 0 & 1 & 0 & 216 \\
\hline
3 & 0 & 0 & 0 & 2 & 448 \\
\hline 
2 & 2 & 0 & 0 & 1 & 424  \\
\hline 
2 & 1 & 1 & 0 & 1 & 528 \\
\hline 
2 & 1 & 0 & 1 & 1 & 424 \\
\hline 
2 & 1 & 0 & 0 & 3 & 736 \\
\hline 
2 & 0 & 0 & 2 & 1 & 304 \\
\hline 
2 & 0 & 0 & 1 & 3 & 648 \\
\hline
2 & 0 & 0 & 0 & 5 & 1072 \\
\hline
1 & 4 & 0 & 0 & 0 & 160 \\
\hline
1 & 3 & 1 & 0 & 0 & 520 \\
\hline
1 & 3 & 0 & 1 & 0 & 360 \\
\hline
1 & 3 & 0 & 0 & 2 & 520 \\
\hline
1 & 2 & 2 & 0 & 0 & 672 \\
\hline
1 & 2 & 1 & 1 & 0 & 552 \\
\hline
1 & 2 & 1 & 0 & 2 & 912 \\
\hline
1 & 2 & 0 & 2 & 0 & 408 \\
\hline
1 & 2 & 0 & 1 & 2 & 704 \\
\hline
1 & 2 & 0 & 0 & 4 & 1040 \\
 \hline
 1 & 1 & 1 & 2 & 0 & 496 \\
 \hline
 1 & 1 & 1 & 1 & 2 & 896 \\
 \hline
 1 & 1 & 1 & 0 & 4 & 1344 \\
 \hline
 1 & 1 & 0 & 3 & 0 & 368 \\
 \hline
 1 & 1 & 0 & 2 & 2 & 736 \\
 \hline
 1 & 1 & 0 & 1 & 4 & 1184 \\
 \hline
 1 & 1 & 0 & 0 & 6 & 1672 \\
 \hline
 1 & 0 & 0 & 4 & 0 & 360 \\
 \hline
 1 & 0 & 0 & 3 & 2 & 696 \\
 \hline
 1 & 0 & 0 & 2 & 4 & 1176 \\
 \hline
 1 & 0 & 0 & 1 & 6 & 1680 \\
 \hline
 1 & 0 & 0 & 0 & 8 & 2272 \\
 \hline
 0 & 5 & 0 & 0 & 1 & 160 \\
 \hline
 0 & 4 & 1 & 0 & 1 & 616 \\
 \hline
 0 & 4 & 0 & 1 & 1 & 456 \\
 \hline
 0 & 4 & 0 & 0 & 3 & 616 \\
 \hline
 0 & 3 & 2 & 0 & 1 & 1152 \\
 \hline
 0 & 3 & 1 & 1 & 1 & 880 \\
 \hline
 0 & 3 & 1 & 0 & 3 & 1280 \\
 \hline
 0 & 3 & 0 & 2 & 1 & 672 \\
 \hline
 0 & 3 & 0 & 1 & 3 & 1008 \\
 \hline
 0 & 3 & 0 & 0 & 5 & 1408 \\
 \hline
 0 & 2 & 2 & 1 & 1 & 1168 \\
 \hline
 0 & 2 & 2 & 0 & 3 & 1680 \\
 \hline
 0 & 2 & 1 & 2 & 1 & 1032 \\
 \hline
 0 & 2 & 1 & 1 & 3 & 1520 \\
 \hline
 0 & 2 & 1 & 0 & 5 & 2072 \\
 \hline
 0 & 2 & 0 & 3 & 1 & 800 \\
 \hline
 0 & 2 & 0 & 2 & 3 & 1296 \\
 \hline 
 0  & 2 & 0 & 1 & 5 & 1848 \\
 \hline
 0 & 2 & 0 & 0 & 7 & 2464 \\
 \hline
 0 & 1 & 1 & 3 & 1 & 1016 \\
 \hline
 0 & 1 & 1 & 2 & 3 & 1552 \\
 \hline
 0 & 1 & 1 & 1 & 5 & 2144 \\
 \hline
 0 & 1 & 1 & 0 & 7 & 2800 \\
 \hline
 0 & 1 & 0 & 4 & 1 & 912 \\
 \hline
 0 & 1 & 0 & 3 & 3 & 1456 \\
 \hline
 0 & 1 & 0 & 2 & 5 & 2088 \\
 \hline
 0 & 1 & 0 & 1 & 7 & 2808 \\
 \hline
 0 & 1 & 0 & 0 & 9 & 3592 \\
 \hline
 0 & 0 & 0 & 5 & 1 & 920 \\
 \hline
 0 & 0 & 0 & 4 & 3 & 1464 \\
 \hline
 0 & 0 & 0 & 3 & 5 & 2176 \\
 \hline
 0 & 0 & 0 & 2 & 7 & 3024 \\
 \hline
 0 & 0 & 0 & 1 & 9 & 3936 \\
 \hline
 0 & 0 & 0 & 0 & 11 & 4912 \\
 \hline
\end{tabular}
\end{table}
}}
\end{thm}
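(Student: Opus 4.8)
The plan is to reduce Theorem \ref{thm:main} to a finite list of intersection numbers and then compute those numbers with \texttt{Bertini}. By Theorem \ref{thm:codim}, a row of the table with multiplicities $(a,b,c,d,e)$ of the types $PPP, PPL, PLP, LLL, PLL$ is precisely the problem of intersecting $\mathcal{T}_{\text{cal}} \subset \PP^{26}$ with a linear subspace $L \subset \PP^{26}$ of dimension $15$, where $L$ is the intersection of the constraint subspaces cut out by the individual correspondences: by Theorem \ref{thm:codim} a $PPP$ contributes a linear subspace of codimension $3$, each $PPL$, $PLP$, and $LLL$ a subspace of codimension $2$, and each $PLL$ a single hyperplane, so that for every row the codimensions sum to $11 = \dim \mathcal{T}_{\text{cal}}$. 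Theorem \ref{thm:codim} also guarantees that for a generic choice of correspondences of the given multiplicities this intersection is zero-dimensional and reduced (this is what ``minimal'' means), so the algebraic degree of the minimal problem equals the cardinality $\#(\mathcal{T}_{\text{cal}} \cap L)$, and by the equivalence of Section \ref{sec:4} each intersection point decodes to a calibrated camera configuration consistent with the data. Hence it suffices to compute these $66$ cardinalities.

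To compute them uniformly I would use the degree-$4912$ witness set for $\mathcal{T}_{\text{cal}}$ underlying Theorem \ref{thm:4912}: a generic codimension-$11$ linear space $\mathcal{L}_{0} \subset \PP^{26}$ together with the $4912$ points $W_{0} = \mathcal{T}_{\text{cal}} \cap \mathcal{L}_{0}$. For each of the $66$ rows I generate random rational (or floating-point) point/line correspondences in the prescribed amounts, assemble the special slicing space $L$ as above, and run in \texttt{Bertini} a linear homotopy deforming $\mathcal{L}_{0}$ to $L$ while tracking the $4912$ start points of $W_{0}$. At the end of the homotopy some paths diverge and the remaining ones converge to $\mathcal{T}_{\text{cal}} \cap L$; after discarding endpoints that fail to lie on $\mathcal{T}_{\text{cal}}$, lie at infinity, or coincide, the number of genuine endpoints is recorded as the algebraic degree. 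Because $L$ can be perturbed back to the generic $\mathcal{L}_{0}$, every point of $\mathcal{T}_{\text{cal}} \cap L$ arises as a limit of a subfamily of the $4912$ witness points, so this homotopy is surjective onto the target solution set and no solution can be missed by the reduction itself.

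It remains to certify the outputs. For genericity I repeat each computation for several independent random instances and confirm the count is invariant; by the parameter-continuation theorem of numerical algebraic geometry \cite{SW} the count stabilizes to the generic value away from a proper algebraic subset of correspondence space, which random rational or floating-point data avoids with probability $1$. Completeness of each homotopy is re-checked by a trace test on $\mathcal{T}_{\text{cal}} \cap L$. As an independent cross-check, for representative rows I set up the minimal problem directly as a polynomial system in the camera parameters, normalizing $A = [\,I \mid 0\,]$ with $B, C$ calibrated (using the $\mathrm{SO}(3,\CC)$ constraints and the residual scaling inside $\mathcal{G}$), and recompute the solution count by monodromy together with a trace test, obtaining the same numbers. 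Finally, for a sample of solutions in each row I verify that the residuals on the defining conditions are negligible, that the point is a smooth point of $\mathcal{T}_{\text{cal}}$, and that it decodes to a calibrated configuration consistent with the data; this also yields the real solutions exhibited on the computational webpage and in Example \ref{exmp:160}.

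The main obstacle is reliability at scale rather than any conceptual difficulty: each of the $66$ problems starts from $4912$ paths, and near the target many paths become ill-conditioned, divergent, or approach singular points, so the computation relies heavily on \texttt{Bertini}'s adaptive-precision tracking and endgames — a single lost path would undercount a degree. This risk is controlled by the surjectivity observation above (which bounds $\#(\mathcal{T}_{\text{cal}} \cap L)$ by $4912$ and realizes every target point as a limit of witness points), by the per-row trace tests, and by the independent camera-parameter recomputation. A secondary point requiring care is the precise description, supplied by Theorem \ref{thm:codim}, of the constraint subspace attached to each correspondence type, together with the verification that a generic intersection of the prescribed profile is genuinely $0$-dimensional and reduced; without this the computed cardinality would not be the algebraic degree.
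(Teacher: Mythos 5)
There is a genuine gap: you treat the slice $\mathcal{T}_{\textup{cal}} \cap L$ as \emph{equal} to the solution set of the minimal problem, but the linear conditions of Proposition \ref{prop:lineartrifocal} are only \emph{necessary} for consistency. The slice contains, besides the genuine solutions, (i) non-physical points lying on the loci $\mathcal{T}_{\textup{cal}}^{0,1},\mathcal{T}_{\textup{cal}}^{1,0},\mathcal{T}_{\textup{cal}}^{0,0}$ of Definition/Proposition \ref{defnprop:specialLoci} (where the rotation parameters satisfy $a^2+b^2+c^2+d^2=0$ or $e^2+f^2+g^2+h^2=0$), and (ii) physical configurations that satisfy the trilinearities but are not consistent with the data, coming from the extraneous components identified in Lemma \ref{lem:topdim}. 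The paper's proof therefore does not stop at counting $\#(\mathcal{T}_{\textup{cal}} \cap L_{\textup{special}})$: it filters the homotopy endpoints by discarding the non-physical points, checking linear independence of centers, testing the \emph{tight} multi-view equations of Theorem \ref{thm:multi-view} via singular values, and invoking epipole avoidance so that Lemma \ref{lem:avoid} upgrades necessity to consistency. Your checks (``on $\mathcal{T}_{\textup{cal}}$, finite, distinct'') do not remove these extraneous points, so your counts would be wrong; e.g.\ for `$1PPP+4PPL$' the trifocal slice has $2552$ points, of which $1664$ are physical with independent centers, but only $160$ are solutions.

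A second, related error is your reading of Theorem \ref{thm:codim}: a \textit{PPP} correspondence imposes a linear subspace of codimension $4$ in $\PP^{26}$ (not $3$), and its intersection with $\mathcal{T}_{\textup{cal}}$ has a component of codimension $3$ -- i.e.\ the intersection is \emph{not} dimensionally transverse, and the theorem nowhere asserts zero-dimensionality or reducedness of the combined slice (finiteness is Lemma \ref{lem:trifocalFinite}, minimality is Proposition \ref{prop:finite}). Consequently, whenever $w_1>0$ the special space $L_{\textup{special}}$ has codimension $11+w_1>11$, and a homotopy from the generic codimension-$11$ witness slice to $L_{\textup{special}}$ is not a square parameter homotopy as you describe; the paper handles this by randomizing to a general $L'_{\textup{special}}\supseteq L_{\textup{special}}$ of codimension $11$, tracking the $4912$ witness points there, and only afterwards testing membership in $L_{\textup{special}}$. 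Your cross-checks (trace test on the slice, re-solving in camera coordinates) are reasonable, but they do not substitute for the relaxation-then-filter logic, which is where the actual degrees in the table come from.
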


\begin{rem}
A calibrated camera configuration $(A,B,C)$ has 11 degrees of freedom (Theorem \ref{thm:4912}),
and the first five columns in the table above represent conditions of 
codimension 3, 2, 2, 2, 1, respectively \,\!(Theorem \ref{thm:codim}).
\end{rem}

\begin{rem}
The algebraic degrees in Theorem \ref{thm:main} are \ul{intrinsic to the underlying camera geometry}.
However, our method of proof uses a device from multi-view geometry called trifocal tensors, which 
breaks symmetry between $(A,B,C)$.  There are other minimal problems for three
calibrated views involving image correspondences of type `\textit{LPP}', `\textit{LPL}', `\textit{LLP}'.
These also possess intrinsic algebraic degrees; but they are not covered by the 
non-symmetric proof technique used here.
\end{rem}

\section{Correspondences} \label{sec:3}

In this section, we examine point/line image correspondences. 
In the first part, we use \textit{multi-view varieties} to describe correspondences. 
This approach furnishes exact polynomial systems for the minimal problems
in Theorem \ref{thm:main}.  However, each parametrized system has a different structure (in terms 
of number and degrees of equations).  This would force a direct analysis for Theorem \ref{thm:main} to proceed case-by-case, and
moreover, each system so obtained is computationally unwieldy.
In Subsection \ref{subsec:3.2}, we recall the construction of the \textit{trifocal tensor} \cite[Chapter 15]{HZ}. 
This is a point $T_{A,B,C} \in \CC^{3 \times 3 \times 3}$ associated to cameras $(A,B,C)$.  It encodes
necessary conditions for $(A,B,C)$ to be consistent with different types of correspondences.  Tractable relaxations to the minimal
problems in Theorem \ref{thm:main} are thus obtained, each with similar structure.  We emphasize that everything in Section \ref{sec:3}
applies equally to calibrated cameras $(A,B,C)$ as well as to uncalibrated cameras.

\subsection{Multi-view varieties}\label{subsec:3.1}
Let $A,B,C \in \CC^{3 \times 4}$ be three projective cameras, not necessarily calibrated.  
Denote by $\alpha: \PP^3 \dashrightarrow \PP^{2}_{A}$, $\beta: \PP^3 \dashrightarrow \PP^{2}_{B}$,
$\gamma: \PP^3 \dashrightarrow \PP^{2}_{C}$ the corresponding linear projections.  We make:

\begin{defn} \label{defn:multi-view}
Fix projective cameras $A,B,C$ as above.  Denote by $\mathcal{F\ell}_{0,1}$ the incidence
variety ${\big{\{}}(X,L) \in \PP^{3} \times \textup{Gr}(\PP^{1}, \PP^{3}) \,\, \big | \,\, X \in L {\big{\}}}$. Then the\textup{:}
\vspace{0.3em}
\begin{itemize}
\setlength\itemsep{0.7em}
\item \textup{\textbf{PLL multi-view variety}} denoted $X_{A,B,C}^{PLL}$ is the closure of the image of
$\mathcal{F\ell}_{0,1} \dashrightarrow \PP_{A}^{2} \times (\PP_{B}^{2})^{\vee} \times (\PP_{C}^{2})^{\vee}, \,\, (X, L) \mapsto {\big{(}}\alpha(X), \beta(L), \gamma(L){\big{)}}$
\item \textup{\textbf{LLL multi-view variety}} denoted $X_{A,B,C}^{LLL}$ is the closure of the image of
$ \textup{Gr}(\PP^{1}, \PP^{3}) \dashrightarrow (\PP_{A}^{2})^{\vee} \times (\PP_{B}^{2})^{\vee} \times (\PP_{C}^{2})^{\vee}, \, L \mapsto {\big{(}}\alpha(L), \beta(L), \gamma(L){\big{)}}$
\item \textup{\textbf{PPL multi-view variety}} denoted $X_{A,B,C}^{PPL}$ is the closure of the image of
$\mathcal{F\ell}_{0,1} \dashrightarrow \PP_{A}^{2} \times \PP_{B}^{2} \times (\PP_{C}^{2})^{\vee}, \,\, (X, L) \mapsto {\big{(}}\alpha(X), \beta(X), \gamma(L){\big{)}}$
\item \textup{\textbf{PLP multi-view variety}} denoted $X_{A,B,C}^{PLP}$ is the closure of the image of
$\mathcal{F\ell}_{0,1} \dashrightarrow \PP_{A}^{2} \times (\PP_{B}^{2})^{\vee} \times \PP_{C}^{2}, \,\, (X, L) \mapsto {\big{(}}\alpha(X), \beta(L), \gamma(X){\big{)}}$
\item \textup{\textbf{PPP multi-view variety}} denoted $X_{A,B,C}^{PPP}$ is the closure of the image of
$\PP^{3} \dashrightarrow \PP_{A}^{2} \times \PP_{B}^{2} \times \PP_{C}^{2}, \,\, X \mapsto {\big{(}}\alpha(X), \beta(X), \gamma(X){\big{)}}$.
\end{itemize}
\end{defn}

\vspace{0.05cm}
Next, we give the dimension and equations for these multi-view varieties; the `\textit{PPP}' case has appeared in \cite{AST}.  
In the following, we notate
$x \in \PP^{2}_{A}$, $x' \in \PP^{2}_{B}$, $x'' \in \PP^{2}_{C}$ for image points and 
$\ell \in (\PP^{2}_{A})^{\vee}$,  $\ell' \in (\PP^{2}_{B})^{\vee}$, $\ell'' \in (\PP^{2}_{C})^{\vee}$ for image lines.  
Also, we postpone treatment of the `\textit{PLL}' case  to Subsection \ref{subsec:3.2}.
In particular, the trilinear form $T_{A,B,C}(x, \ell', \ell'')$ will be defined there.

\begin{thm} \label{thm:multi-view}
Fix $A,B,C$. The multi-view varieties from
Definition \textup{\ref{defn:multi-view}} are irreducible.  If $A,B,C$ have linearly independent centers in $\PP^{3}$, then the varieties have the
following dimensions and multi-homogeneous prime ideals.

\vspace{0.2em}

\begin{itemize}
\setlength\itemsep{0.95em}
\item $\textup{dim}(X_{A,B,C}^{PLL}) = 5$ and $I(X_{A,B,C}^{PLL}) = \langle T_{A,B,C}(x, \ell', \ell'') \rangle \subset \CC[x_{i}, \ell'_{j}, \ell''_{k}] $
\item $\textup{dim}(X_{A,B,C}^{LLL}) = 4$ and $I(X_{A,B,C}^{LLL}) \subset \CC[\ell_{i}, \ell'_{j}, \ell''_{k}]$ is generated by the maximal minors of
the matrix $\begin{pmatrix} A^{T} \ell & B^{T} \ell' & C^{T} \ell'' \end{pmatrix}_{4 \times 3}$
\item $\textup{dim}(X_{A,B,C}^{PPL}) = 4$ and $I(X_{A,B,C}^{PPL}) \subset \CC[x_{i}, x'_{j}, \ell''_{k}]$ is generated by the maximal minors of
the matrix $\begin{pmatrix} A & x & 0 \\ B & 0 & x' \\ \ell''^{T}C & 0 & 0 \end{pmatrix}_{7 \times 6}$
\item $\textup{dim}(X_{A,B,C}^{PLP}) = 4$ and $I(X_{A,B,C}^{PLP}) \subset \CC[x_{i}, \ell'_{j}, x''_{k}]$ is generated by the maximal minors of
the matrix $\begin{pmatrix} A & x & 0 \\ C & 0 & x'' \\ \ell'^{T}B & 0 & 0 \end{pmatrix}_{7 \times 6}$
\item $\textup{dim}(X_{A,B,C}^{PPP}) = 3$ and $I(X_{A,B,C}^{PPP}) \subset \CC[x_{i}, x'_{j}, x''_{k}]$ is generated by the maximal minors of
the matrix $\begin{pmatrix} A & x & 0 & 0 \\ B & 0 & x' & 0 \\ C & 0 & 0 & x'' \end{pmatrix}_{9 \times 7}$ together with 
$\textup{det} \begin{pmatrix} A & x & 0 \\ B & 0 & x' \end{pmatrix}_{6 \times 6}$
and \,\, $\textup{det} \begin{pmatrix} A & x & 0 \\ C & 0 & x'' \end{pmatrix}_{6 \times 6}$
and \,\, $\textup{det} \begin{pmatrix} B & x' & 0 \\ C & 0 & x'' \end{pmatrix}_{6 \times 6}$
\end{itemize}
\end{thm}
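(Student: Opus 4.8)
The plan is to establish the five cases by a common route: irreducibility and dimension from the parametrization, then, for the four determinantal cases, a set-theoretic identification followed by an upgrade to the scheme-theoretic statement. Each multi-view variety is by Definition~\ref{defn:multi-view} the closure of the image of an irreducible parameter space ($\PP^{3}$, $\textup{Gr}(\PP^{1},\PP^{3})$, or $\mathcal{F\ell}_{0,1}$) under a rational map, hence is irreducible; in particular its ideal is automatically prime and the content is purely to exhibit generators. For the dimensions I would compute the generic fibre of each parametrizing map by linear algebra on back-projected rays and planes: the generic fibre of $(X,L)\mapsto(\alpha(X),\beta(L),\gamma(L))$ is one point, since $\beta(L)=\ell'$ and $\gamma(L)=\ell''$ force $L$ to be the line common to the two planes back-projected from $\ell'$ and $\ell''$, after which $\alpha(X)=x$ forces $X=L\cap\alpha^{-1}(x)$; the generic fibre of $(X,L)\mapsto(\alpha(X),\beta(X),\gamma(L))$ is a $\PP^{1}$, since $x$ and $x'$ determine $X$ as the meet of two viewing rays while $L$ then sweeps the pencil of lines through $X$ in the plane back-projected from $\ell''$. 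Reading off fibre dimensions $0,1,1,0$ for $LLL,PPL,PLP,PPP$ (and $0$ for $PLL$) gives the stated dimensions $4,4,4,3$ and $5$; the linear independence of the centres of $A,B,C$ is exactly what keeps these rays and planes in general position and makes the maps dominant onto varieties of those dimensions.

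For the four determinantal cases let $J$ denote the listed ideal. The inclusion $J\subseteq I(X^{\bullet}_{A,B,C})$ is read off from the kernel of the big matrix in blocks: for $PPL$, a nonzero kernel vector $(X,\lambda,\mu)^{T}$ records a point $X\in\PP^{3}$ with $\alpha(X)=x$, $\beta(X)=x'$ and $\gamma(X)\in\ell''$, and such an $X$ lies on some line $L\ni X$ with $\gamma(L)=\ell''$; the same bookkeeping handles $LLL$, $PLP$, and --- together with the three two-view determinants $\det\bigl(\begin{smallmatrix}A&x&0\\B&0&x'\end{smallmatrix}\bigr)$ and its analogues --- the $PPP$ case. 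Hence every listed generator vanishes on the image of the parametrization, so on its closure $X^{\bullet}_{A,B,C}$. For $PLL$ the single generator $T_{A,B,C}(x,\ell',\ell'')$ vanishes on $X^{PLL}_{A,B,C}$ by the defining property of the trifocal tensor; I would defer this case to Subsection~\ref{subsec:3.2}, where $T_{A,B,C}$ is constructed and shown irreducible, so that $I(X^{PLL}_{A,B,C})=\langle T_{A,B,C}\rangle$ once $V(T_{A,B,C})$ is identified with $X^{PLL}_{A,B,C}$.

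The substance is the reverse inclusion $J=I(X^{\bullet}_{A,B,C})$ in the determinantal cases. I would first promote the previous step to set-theoretic equality $V(J)=X^{\bullet}_{A,B,C}$ by a case analysis of the kernel of the big matrix, splitting on which of the camera centres (if any) the underlying point of $\PP^{3}$ happens to be: the degenerate kernel vectors are seen to produce only limits of genuine correspondences, so $V(J)$ acquires no component outside $X^{\bullet}_{A,B,C}$. Granting this, it remains to show $J$ is radical. I would combine two facts: (i) the quotient of the polynomial ring by $J$ is Cohen--Macaulay --- for the pure maximal-minors ideals ($LLL$, $PPL$, $PLP$) by the Eagon--Northcott complex, once the determinantal locus is known to have the expected codimension, and for $PPP$ by \cite{AST} --- so that $J$ has no embedded primes and no lower-dimensional components, and is therefore primary to $I(X^{\bullet}_{A,B,C})$; and (ii) $V(J)$ is generically reduced, which I would verify by exhibiting one explicit smooth point of $X^{\bullet}_{A,B,C}$ from the parametrization and checking, by the Jacobian criterion, that the listed generators cut it out transversally there. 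Cohen--Macaulay together with generic reducedness forces reduced (Serre's criterion $R_{0}+S_{1}$), so $J=I(X^{\bullet}_{A,B,C})$ is prime.

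The hard part will be the expected-codimension input to (i): these matrices are far from generic, carrying many zero and repeated entries, so the classical generic-determinantal bounds do not apply, and the argument must genuinely use the genericity of $(A,B,C)$ --- for instance via a differential computation at a parametrized point, or via a carefully chosen specialization of $(A,B,C)$ and of the image coordinates that degenerates the locus to a transparent one without dropping its codimension. The generic-reducedness check in (ii) is a secondary obstacle: a finite Jacobian-rank computation that one wants to organize structurally rather than expand by hand.
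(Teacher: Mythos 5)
Your route is genuinely different from the paper's. The paper proves Theorem \ref{thm:multi-view} by a symmetry reduction plus computer algebra: it checks that both the multi-view ideal $I$ and the candidate ideal $J$ of listed minors transform compatibly under the action of $\textup{SL}(3,\CC)^{\times 3}$ (change of image coordinates) and $\textup{SL}(4,\CC)$ (change of world coordinates), notes that any triple with linearly independent centers can be moved by this action to one explicit normal form $(A,B,C)$, and then verifies the dimension and prime-ideal statements for that single normal form in \texttt{Macaulay2} \cite{GS}. Your skeleton --- irreducibility and dimension from the parametrization and fibre counts, the containment $J\subseteq I$ by kernel bookkeeping, set-theoretic equality by analyzing degenerate kernel vectors, and then primality via Cohen--Macaulayness (Eagon--Northcott for $LLL$, $PPL$, $PLP$; \cite{AST} for $PPP$) combined with generic reducedness and Serre's criterion --- is a legitimate computer-free strategy, and the parts you do argue (fibre dimensions, $J\subseteq I$, the limit analysis of degenerate kernels) are essentially right in outline.

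However, as submitted the proposal has genuine gaps. The two inputs you yourself flag --- that the determinantal loci have the expected codimension (without which the Eagon--Northcott/Hochster--Eagon argument gives nothing) and that $V(J)$ is generically reduced --- are precisely the mathematical substance of the theorem, and no argument for either is given; the same applies to the irreducibility of the trilinear form needed to close the $PLL$ case. There is also a quantifier problem: your proposed fixes ("use the genericity of $(A,B,C)$", a specialization with semicontinuity, a differential computation at one parametrized point) can at best establish the conclusion for \emph{generic} camera triples, whereas the theorem asserts it for \emph{every} triple with linearly independent centers. Closing that gap requires exactly the paper's equivariance observation: the group action above carries $I$ to $I$ and $J$ to $J$ and is transitive on triples with linearly independent centers, so a verification at one normal form suffices --- and once you reduce to that normal form, your codimension and Jacobian-rank checks become finite explicit computations, i.e.\ you have essentially reproduced the paper's \texttt{Macaulay2} proof. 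So either carry out the expected-codimension and generic-reducedness arguments intrinsically (the hard work you have deferred), or add the normal-form reduction and accept a symbolic verification as the paper does; similar care is needed with \cite{AST}, whose hypotheses on the cameras must be matched to "linearly independent centers" via the same reduction.
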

\begin{proof}
Irreducibility is clear from Definition \ref{defn:multi-view}.  For the dimension and prime ideal statements,
we may assume that: 
\vspace{0.1em}
$$A = \begin{bmatrix} 1 & 0 & 0 & 0 \\ 0 & 1 & 0 & 0 \\ 0 & 0 & 1 & 0 \end{bmatrix}\!, \,\,
B = \begin{bmatrix} 1 & 0 & 0 & 0 \\ 0 & 1 & 0 & 0 \\ 0 & 0 & 0 & 1 \end{bmatrix}\!, \,\,
C = \begin{bmatrix} 1 & 0 & 0 & 0 \\ 0 & 0 & 1 & 0 \\ 0 & 0 & 0 & 1 \end{bmatrix}.$$

\noindent This is without loss of generality in light of the following group symmetries.  
Let $g, g', g'' \in \textup{SL}(3, \CC)$ and $h \in \textup{SL}(4, \CC)$.  To illustrate, 
consider the third case above, and let $J_{A,B,C}^{PPL}  \subset \CC[x_{i}, x'_{j}, \ell''_{k}] $ be the 
ideal generated by the maximal minors  mentioned there.  It is straightforward to check that:

\vspace{-0.3em}

$$ I(X_{Ah,Bh,Ch}^{PPL}) = I(X_{A,B,C}^{PPL}) \,\,\, \textup{ and } \,\,\, J_{Ah,Bh,Ch}^{PPL} = J_{A,B,C}^{PPL}.$$

\vspace{0.6em}

\noindent Also, we can check that:

\vspace{-0.3em}

$$ I(X_{gA,\,\, g'B,\,\, g''C}^{PPL}) = (g, g', \wedge^{2} g'') \cdot I(X_{A,B,C}^{PPL})$$ 
$$ \textup{ and } \,\,\,\, J_{gA,\,\, g'B,\,\, g''C}^{PPL} = (g, g', (g''^{\,T})^{-1}) \cdot J_{A,B,C}^{PPL}.$$

\vspace{0.4em}

\noindent Here the left, linear action of $\textup{SL}(3, \CC) \times \textup{SL}(3, \CC) \times \textup{SL}(3, \CC)$ on $\CC[x_{i}, x'_{j}, \ell''_{k}]$ is via 
$(g, g', g'') \cdot f(x, x', \ell'') = f(g^{-1}x, g'^{-1}x', g''^{-1}\ell'')$ for $f \in \CC[x_{i}, x'_{j}, \ell''_{k}]$.
Also, $\wedge^{2} g'' = (g''^{\,T})^{-1} \in \CC^{3 \times 3}$.  So, for the `\textit{PPL}' case, $I$ and $J$ transform in the same way when $(A,B,C)$ is replaced by
$(gAh, g'Bh, g''Ch)$; in the other cases, this holds similarly.  Assuming that $A, B, C$ have linearly independent centers, we may choose $g, g', g'', h$ to harmlessly move
the cameras into the position above.  Now using the computer algebra system \texttt{Macaulay2} \cite{GS}, we
verify the dimension and prime ideal statements for this special position.
\end{proof}

\begin{rem}
In Theorem \ref{thm:multi-view}, if $A, B, C$ do not have linearly independent centers,
then the minors described still vanish on the multi-view varieties, by continuity in $(A,B,C)$.
\end{rem}

Now, certainly a point/line correspondence that is consistent with $(A,B,C)$
lies in the appropriate multi-view variety; consistency means
that the correspondence is a point in the \textit{set-theoretic} image of
the appropriate rational map in Definition \ref{defn:multi-view}.  
Since the multi-view varieties
are the Zariski closures of those set-theoretic images, care is needed to make a converse.  We require:

\begin{defn} \label{defn:epi}
Let $A,B,C$ be three projective cameras with distinct centers.  The \textup{\textbf{epipole}} denoted $\textbf{e}_{1 \leftarrow 2}$ is the point
$\alpha(\textup{ker}(B)) \in \PP^2_{A}$.  That is, $\textbf{e}_{1 \leftarrow 2}$ is the image under $A$ of the center of $B$.  
Epipoles $\textbf{e}_{1 \leftarrow 3}, \textbf{e}_{2 \leftarrow 1}, \textbf{e}_{2 \leftarrow 3}, \textbf{e}_{3 \leftarrow 1}, \textbf{e}_{3 \leftarrow 2}$
are defined similarly.
\end{defn}

\begin{lem} \label{lem:avoid}
Let $A,B,C$ be three projective cameras with distinct centers.  Let 
$\pi \in (\PP^2 \sqcup (\PP^2)^{\vee})^{\times 3}$.
Assume this point/line correspondence \textup{\textbf{avoids epipoles}}.  For example,
if $\pi = (x, x', \ell'') \in \PP^{2}_{A} \times \PP^{2}_{B} \times (\PP^{2}_{C})^{\vee}$, avoidance of 
epipoles means that 
$x \neq \textbf{e}_{1\leftarrow 2}, \textbf{e}_{1\leftarrow 3};$ $x' \neq \textbf{e}_{2\leftarrow 1}, \textbf{e}_{2\leftarrow 3};$ and 
$\ell'' \niton \textbf{e}_{3\leftarrow 1}, \textbf{e}_{3\leftarrow 2}$.  Then $\pi$ is consistent with $(A,B,C)$ if $\pi$ is in the suitable multi-view variety.
\end{lem}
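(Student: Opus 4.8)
The plan is to argue case by case over the nine correspondence types (organized by how many of the three slots are points versus lines), but in each case the strategy is the same: start from the hypothesis that $\pi$ lies in the appropriate multi-view variety, hence in the Zariski closure of the set-theoretic image of the rational map $\varphi$ of Definition \ref{defn:multi-view}; and produce an explicit preimage $(X,L)$ (or $X$, or $L$) that (i) lies in the domain of definition of $\varphi$ and (ii) genuinely maps to $\pi$, so that consistency in the sense of Definition \ref{defn:consistent} holds. The subtlety being addressed is exactly that a point of the closure need not a priori lie in the set-theoretic image; the epipole-avoidance hypothesis is what rules out the boundary behavior. As in the proof of Theorem \ref{thm:multi-view}, I would first reduce to the normal form
$$A = \begin{bmatrix} 1 & 0 & 0 & 0 \\ 0 & 1 & 0 & 0 \\ 0 & 0 & 1 & 0 \end{bmatrix}, \quad B = \begin{bmatrix} 1 & 0 & 0 & 0 \\ 0 & 1 & 0 & 0 \\ 0 & 0 & 0 & 1 \end{bmatrix}, \quad C = \begin{bmatrix} 1 & 0 & 0 & 0 \\ 0 & 0 & 1 & 0 \\ 0 & 0 & 0 & 1 \end{bmatrix}$$
when the centers are linearly independent, using the $\textup{SL}(3,\CC)^{\times 3} \times \textup{SL}(4,\CC)$ symmetry (which permutes epipoles in the obvious compatible way, so avoidance is preserved); the general distinct-centers case I would then recover by a separate, slightly more careful argument or by remarking that linearly dependent but distinct centers can be handled by a limiting/direct-coordinates computation.

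For the concrete extraction of a preimage, take the \textit{PPL} case as the template. If $\pi = (x,x',\ell'')$ lies in $X_{A,B,C}^{PPL}$, then by Theorem \ref{thm:multi-view} the $7\times 6$ matrix $\left(\begin{smallmatrix} A & x & 0 \\ B & 0 & x' \\ \ell''^T C & 0 & 0\end{smallmatrix}\right)$ has rank $\le 5$, so its kernel contains a nonzero vector, which I would write as $(X; s; t)$ with $X \in \CC^4$ and scalars $s,t$. The kernel relations say $AX = -sx$, $BX = -tx'$, and $\ell''^T C X = 0$. I then need: $X \ne 0$; $s \ne 0$ and $t \ne 0$ (so that $AX$ and $BX$ actually equal $x$ and $x'$ projectively rather than vanishing); and a line $L \ni X$ with $CL = \ell''$, i.e. $L$ contained in the plane $C^T\ell''$ of $\PP^3$ — which exists precisely because $X$ lies on that plane (that is what $\ell''^T C X = 0$ gives) and $\ker C \notin L$ can be arranged as long as $X \neq \ker C$. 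This is where epipole avoidance enters: if $s = 0$ then $AX = 0$, i.e. $X = \ker A$, forcing $x' = $ (a scalar times) $BX = B\ker A = \mathbf{e}_{2\leftarrow 1}$, contradicting avoidance; symmetrically $t \neq 0$; and $X = \ker C$ would force $x = A\ker C = \mathbf{e}_{1\leftarrow 3}$, again excluded. Finally $X \neq 0$ follows because if $X = 0$ then the kernel vector is $(0;s;t)$ with $(s,t)\neq 0$, giving $sx = 0$ and $tx' = 0$, impossible since $x,x'$ are nonzero and one of $s,t$ is nonzero. The other cases (PLP, PPP, LLL, and the PLL case using the single trilinearity $T_{A,B,C}$) are structurally identical: read off the rank drop from Theorem \ref{thm:multi-view}, take a kernel vector, interpret its blocks as a candidate point and/or line together with homogenizing scalars, and use avoidance of the relevant epipoles to force those scalars nonzero and the point away from the relevant camera centers.

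The main obstacle I expect is the \textbf{PLL and LLL cases}, where a line $L$ rather than a point must be reconstructed and where a naive kernel extraction can produce a degenerate "line" (e.g. the reconstructed $2$-dimensional subspace of $\CC^4$ might drop rank, or $L$ might pass through one of the camera centers so that $\beta(L)$ or $\gamma(L)$ is undefined rather than equal to the prescribed image line). Handling this requires showing that, under avoidance of epipoles, the reconstructed line genuinely has projective dimension $1$ and misses $\ker A, \ker B, \ker C$; for LLL one reconstructs $L$ as (the pencil cut out by) the two independent columns among $A^T\ell, B^T\ell', C^T\ell''$ viewed in $(\PP^3)^\vee$, and must argue that the rank-$2$ condition from Theorem \ref{thm:multi-view} together with epipole avoidance forces exactly rank $2$ (not rank $\le 1$, which would correspond to the three image lines being "too degenerate"). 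Once the line is shown to be a genuine line avoiding the centers, pulling back along $\alpha,\beta,\gamma$ recovers the prescribed data and consistency follows. A secondary, purely bookkeeping obstacle is simply that "avoids epipoles" was only spelled out for the PPL pattern in the statement of Lemma \ref{lem:avoid}, so I would begin by recording the analogous condition for each of the nine patterns (for a line slot in camera $i$, the condition is that the line does not pass through either epipole $\mathbf{e}_{i\leftarrow j}$; for a point slot, that the point differs from both $\mathbf{e}_{i\leftarrow j}$), after which the verification is uniform.
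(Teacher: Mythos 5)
Your proposal is correct and takes essentially the same route as the paper: both read membership in the multi-view variety as a rank drop, extract a witness point/line from the kernel (equivalently, from the back-projections), and then use epipole avoidance to force the witness away from the camera centers so that it certifies consistency; your worry about degenerate lines in the PLL/LLL cases resolves exactly as you sketch, since any line in the intersection of the back-projected planes avoids the centers under the avoidance hypothesis. The only real difference is cosmetic: the paper works directly with arbitrary cameras via back-projections (so no normal-form reduction is needed, and the distinct-but-dependent-centers case you defer is covered for free via the Remark after Theorem \ref{thm:multi-view} that the minors vanish for any centers), whereas you force the homogenizing scalars to be nonzero up front rather than letting avoidance rule out coincidences with centers at the end.
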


\begin{proof}
Assuming that $\pi$ is in the multi-view variety,
then $\pi$ satisfies the equations from Theorem \ref{thm:multi-view}.
This is equivalent to containment conditions on the \textit{back-projections} of $\pi$,
without any hypothesis on the centers of $A,B,C$.  

We spell this out for the `\textit{PPL}' case, where $\pi = (x,x',\ell'') \in \PP_{A}^{2} \times \PP_{B}^{2} \times (\PP_{C}^{2})^{\vee}$.
Here the back-projections are the lines $\alpha^{-1}(x), \beta^{-1}(x') \subseteq \PP^3$ and the plane
$\gamma^{-1}(\ell'') \subseteq \PP^3$.  The minors from Theorem \ref{thm:multi-view} vanish
if and only if there exists $(X, L) \in \mathcal{F\ell}_{0,1}$ such that $X \in \alpha^{-1}(x)$, 
$X \in \beta^{-1}(x')$ and $L \subseteq \gamma^{-1}(\ell'')$.
To see this, note that the minors vanish only if:
$$\begin{pmatrix} A & x & 0 \\ B & 0 & x' \\ \ell''^{T}C & 0 & 0 \end{pmatrix} \begin{pmatrix} X \\ -\lambda \\ -\lambda' \end{pmatrix} = 0 \,\, \textup{ for some nonzero } \,\,
\begin{pmatrix} X \\ -\lambda \\ -\lambda' \end{pmatrix} \in \CC^{6},$$
where $X \in \CC^{4}, \lambda \in \CC$ and $\lambda' \in \CC$.  
Since $x, x' \in \CC^3$ are nonzero, it follows that $X$ is nonzero, and so defines a point $X \in \PP^3$.  From $AX = \lambda x$, 
the line $\alpha^{-1}(x) \subseteq \PP^3$ contains $X \in \PP^3$.
Similarly $AX = \lambda' x$ implies $X \in \beta^{-1}(x')$.
Thirdly, $\ell''^{T}C X = 0$ says that $X$ lies on
the plane $\gamma^{-1}(\ell'') \subseteq \PP^3$.  Now taking any line $L \subseteq \PP^3$ 
with $X \in L \subseteq \gamma^{-1}(\ell'')$ produces a satisfactory point
$(X, L) \in \mathcal{F\ell}_{0,1}$, and reversing the argument gives the converse.

Returning to the lemma, since $\pi$ avoids epipoles, the back-projections
of $\pi$ avoid the centers of $A,B,C$.
In the `\textit{PPL}' case, this implies that $(X,L)$ avoids
the centers of $A,B,C$.  
Thus $(X,L)$ witnesses consistency, because $\alpha(X) = x, \beta(X) = x', \gamma(L) = \ell''$.
The other cases are finished similarly.
\end{proof}

The results of this subsection have provided
tight equational formulations for a camera configuration
and a point/line image correspondence to be consistent.
This leads to a parametrized system of polynomial equations 
for each minimal problem in Theorem \ref{thm:main}. 
For instance, for the minimal problem `$1 PPP + 4 \textit{PPL}$',
the unknowns are the entries of $A,B,C$, 
up to the action of the group $\mathcal{G}$.
Due to Theorem \ref{thm:multi-view}, there are
$\binom{9}{7} + 3 + 4 \cdot \binom{7}{6} = 67$
quartic equations. Their coefficients are
parametrized cubically and quadratically
by the image data in
$(\PP^{2})^{11} \times \big{(}(\PP^{2})^{\vee}\big{)}^{4}$.
Since this parameter space is irreducible, to find
the generic number of solutions to the system,
we may specialize to \textit{one} random instance, such as in Example
\ref{exmp:160}.
Nonetheless, solving a single instance of this system -- 
`as is' -- is computationally intractable, let alone solving 
systems for the other minimal problems present in Theorem \ref{thm:main}.

The way out is to \underline{nontrivially replace} the above systems with other systems, which enlarge
the solution sets but amount to accessible computations.  This key
maneuver is based on \textit{trifocal tensors} from multi-view geometry.
Before doing so, we justify calling the problems in Theorem \ref{thm:main} minimal.

\begin{prop} \label{prop:finite}
For each problem in Theorem \textup{\ref{thm:main}}, given generic correspondence data,
there is a finite number\footnote{\label{footnote:number} This number is shown to be positive in the proof of Theorem \ref{thm:main}.} of solutions, i.e. calibrated camera configurations $(A,B,C)$.  Moreover, solutions
have linearly independent centers.
\end{prop}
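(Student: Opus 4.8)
The plan is to set up an incidence variety relating camera triples to image data and to run a parameter count using its two extreme fibrations. Let $\mathcal{V}$ be the variety of triples of calibrated cameras; since $\textup{SO}(3,\CC)$ is irreducible, $\mathcal{V}$ is irreducible of dimension $18$, and a short computation (a generic pair of calibrated cameras has distinct centers) shows that $\mathcal{G}$ acts with trivial stabilizers on the dense open locus $\mathcal{V}^{\circ}\subseteq\mathcal{V}$ of triples with linearly independent centers, so that $\mathcal{V}^{\circ}/\mathcal{G}$ is $11$-dimensional and parametrizes the configurations in question. Fix one of the $66$ problems, with $n$ correspondences of types $\tau_{1},\dots,\tau_{n}$, and put $P=\prod_{j=1}^{n}P_{j}$, the product of the associated correspondence spaces; each $P_{j}$ is a product of three copies of $\PP^{2}$ or $(\PP^{2})^{\vee}$, so $\dim P=6n$. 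Consider the $\mathcal{G}$-invariant incidence variety
$$\Sigma=\bigl\{\,\bigl((A,B,C),\pi_{1},\dots,\pi_{n}\bigr)\in\mathcal{V}\times P\ :\ (A,B,C)\text{ is consistent with each }\pi_{j}\,\bigr\},$$
with $\mathcal{G}$ acting on the first factor; it is invariant because consistency is preserved by that action. The fibre of $\Sigma\to P$ over $p$ is the set of camera triples consistent with $p$, and the configurations consistent with $p$ are exactly its $\mathcal{G}$-orbits, so the claim is that for generic $p$ this fibre is a finite union of $\mathcal{G}$-orbits, all contained in $\mathcal{V}^{\circ}$.

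For the part $\Sigma^{\circ}=\Sigma\cap(\mathcal{V}^{\circ}\times P)$ I would use the other fibration $\Sigma^{\circ}\to\mathcal{V}^{\circ}$. Fix $(A,B,C)\in\mathcal{V}^{\circ}$. By Lemma~\ref{lem:avoid}, a correspondence avoiding epipoles is consistent with $(A,B,C)$ precisely when it lies in the relevant multi-view variety; since avoiding epipoles is an open condition satisfied by a general point of each (irreducible) multi-view variety, the type-$\tau_{j}$ correspondences consistent with $(A,B,C)$ form a dense subset of $X^{\tau_{j}}_{A,B,C}$, of dimension $\dim X^{\tau_{j}}_{A,B,C}$. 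Hence the fibre of $\Sigma^{\circ}\to\mathcal{V}^{\circ}$ is a product of dimension $\sum_{j}\dim X^{\tau_{j}}_{A,B,C}$, which by Theorem~\ref{thm:multi-view} equals $3a_{1}+4a_{2}+4a_{3}+4a_{4}+5a_{5}$ where $(a_{1},\dots,a_{5})$ is the chosen row of the table in the column order PPP, PPL, PLP, LLL, PLL. Reading off the codimensions $3,2,2,2,1$ from Theorem~\ref{thm:multi-view}, one checks directly that every row of the table satisfies the minimality identity $3a_{1}+2a_{2}+2a_{3}+2a_{4}+a_{5}=11$, so this fibre dimension is the constant $6n-11$; consequently every component of $\Sigma^{\circ}$ has dimension at most $18+(6n-11)=\dim P+7$ (in fact $\Sigma^{\circ}$ is irreducible, being the image of the irreducible variety of camera triples equipped with back-projections). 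Since $\mathcal{G}$ acts on $\Sigma^{\circ}$ with $7$-dimensional orbits, a generic fibre of $\Sigma^{\circ}\to P$ is either empty or a finite union of $7$-dimensional $\mathcal{G}$-orbits, hence consists of finitely many configurations, each with linearly independent centers.

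It remains to eliminate, for generic $p$, configurations with linearly dependent centers. The locus $\mathcal{V}^{\textup{deg}}=\mathcal{V}\setminus\mathcal{V}^{\circ}$ is a proper closed subvariety, so $\dim\mathcal{V}^{\textup{deg}}\le 17$. The point I would establish is that $\dim X^{\tau}_{A,B,C}$ never exceeds its generic value, for \emph{any} full-rank cameras: when $\tau$ is PPP, LLL, or PLL this is automatic, since $X^{\tau}$ is the closure of the image of $\PP^{3}$, of $\textup{Gr}(\PP^{1},\PP^{3})$, and of $\mathcal{F\ell}_{0,1}$ respectively, of dimensions $3,4,5$; and when $\tau$ is PPL or PLP the parametrizing map $\mathcal{F\ell}_{0,1}\dashrightarrow P_{\tau}$ has every nonempty fibre of dimension $\ge 1$, because the image correspondence pins down the point of $\PP^{3}$ in the back-projection but leaves the line through it free to rotate inside a plane, whence $\dim X^{\tau}\le 5-1=4$. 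Therefore the fibre of $\Sigma\to\mathcal{V}$ over any degenerate triple again has dimension at most $6n-11$, so $\dim\bigl(\Sigma\cap(\mathcal{V}^{\textup{deg}}\times P)\bigr)\le 17+(6n-11)=\dim P+6$; quotienting by the $7$-dimensional action, its image in $P$ has dimension at most $\dim P-1<\dim P$. Thus generic $p$ meets no configuration with linearly dependent centers, and combined with the previous paragraph this yields the proposition. (Non-emptiness of the fibre, i.e.\ positivity of the count, is a separate matter, handled in the proof of Theorem~\ref{thm:main}.)

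The main obstacle is this last step: Theorem~\ref{thm:multi-view} controls the multi-view varieties only when the centers are linearly independent, so the bound $\dim X^{\tau}_{A,B,C}\le\dim X^{\tau}_{\textup{generic}}$ for \emph{arbitrary} camera triples must be argued separately, the only nontrivial cases being PPL and PLP, where the content is the elementary remark that the line occurring in the back-projection of the correspondence is never determined by the image data, so the generic fibre of the parametrizing map stays positive-dimensional. Everything else is the bookkeeping check that the codimensions $3,2,2,2,1$ sum to $11$ along each of the $66$ rows.
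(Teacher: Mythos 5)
Your overall strategy is the same as the paper's: an incidence correspondence between camera triples and image data, with the two projections compared by a fiber-dimension count using the multi-view variety dimensions and the identity $3a_1+2a_2+2a_3+2a_4+a_5=11$; the paper just normalizes by $\mathcal{G}$ first (working with $\textup{SO}(3,\CC)^{\times 2}\times\PP^5$) instead of arguing equivariantly. Your uniform bound $\dim X^{\tau}_{A,B,C}\le$ (generic value) for arbitrary full-rank cameras, via the free pencil of lines in the PPL/PLP back-projections, is a nice way to avoid the paper's separate recomputation of multi-view dimensions on the degenerate loci.

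However, the last step has a genuine gap. You bound $\dim\bigl(\Sigma\cap(\mathcal{V}^{\textup{deg}}\times P)\bigr)\le 17+(6n-11)$ and then subtract $7$, asserting that the fibers over $P$ are unions of $7$-dimensional $\mathcal{G}$-orbits. But $\mathcal{G}$ does not act with trivial stabilizers on all of $\mathcal{V}^{\textup{deg}}$: when all three cameras share the same center $c\in\mathbb{A}^3$, the one-parameter subgroup of dilations about $c$ (take $S=I$, $v=-(1-\mu)R^{T}t$, $\mu\in\CC^{*}$) fixes each camera up to scale, so the stabilizer is isomorphic to $\CC^{*}$ and the orbits are only $6$-dimensional. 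On that sublocus your count gives only $17+(6n-11)-6=\dim P$, which does not rule out dominance. The statement is still salvageable in two standard ways: (i) sharpen the bound on the degenerate locus — collinearity of three points of $\PP^3$ is a codimension-$2$ condition, so $\dim\mathcal{V}^{\textup{deg}}=16$, and then orbit dimension $\ge 6$ already yields image dimension $\le\dim P-1$; or (ii) split off the identical-centers locus, which has dimension $12$ inside $\mathcal{V}$, and run the count there with $6$-dimensional orbits. The second option is exactly the case distinction the paper makes: it treats distinct-but-collinear centers (an $11-1=10$-dimensional family of configurations) and identical centers (a $6$-dimensional family, reflecting precisely the extra $\CC^{*}$ stabilizer) as separate dimension counts. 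As written, your single uniform count does not establish the ``linearly independent centers'' half of the proposition.
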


\begin{proof}
For calibrated $A,B,C$,
we may act by $\mathcal{G}$ so
$A = \begin{bmatrix} I_{3 \times 3} & 0 \end{bmatrix}$, 
$B = \begin{bmatrix} R_{2} & t_{2} \end{bmatrix}$ and $C = \begin{bmatrix} R_{3} & t_{3} \end{bmatrix}$
where $R_{2}, R_{3} \in \textup{SO}(3, \CC)$ and $t_{2}, t_{3} \in \CC^{3}$.  Furthermore,  
$t_{2}$ and $t_{3}$ may be jointly scaled.  Thus, if $A,B,C$ have non-identical centers, we get a point 
in $\textup{SO}(3, \CC)^{\times 2} \times \PP^{5}$.  This point is unique and configurations
with non-identical centers are in bijection with $\textup{SO}(3, \CC)^{\times 2} \times \PP^{5}$.

Now consider one of the minimal problems from Theorem \ref{thm:main},
`$w_{1} \textit{PPP} + w_{2} \textit{PPL} + w_{3} \textit{PLP} + w_{4} \textit{LLL} + w_{5} \textit{PLL}$'.
Notice that the problems in Theorem  \ref{thm:main}, are those for which the weights $(w_{1}, w_{2}, w_{3}, w_{4}, w_{5}) \in \ZZ_{\geq 0}$ satisfy
$3w_{1} + 2w_{2} + 2w_{3} + 2w_{4} + w_{5} = 11$ and $w_{2} \geq w_{3}$.  Image correspondence data is a point in the product
$\mathcal{D}_{w} := (\PP^{2} \times \PP^{2} \times \PP^{2})^{\times w_{1}} \times \ldots \times (\PP^{2} \times (\PP^{2})^{\vee} \times (\PP^{2})^{\vee})^{\times w_{5}}$.

Consider the incidence diagram:

\vspace{-1.1em}

$$\textup{SO}(3, \CC)^{\times 2} \times \PP^{5} \longleftarrow \Gamma \longrightarrow \mathcal{D}_{w}$$

\vspace{0.2em}

\noindent where \begin{footnotesize}$\Gamma := \overline{\{\big{(}(A,B,C), d\big{)} \in \big{(}\textup{SO}(3, \CC)^{\times 2} \times \PP^{5} \big{)} \times \mathcal{D}_{w} \,\, | \,\, (A,B,C) \textup{ and } d \textup{ are consistent} \}}$\end{footnotesize}

\vspace{0.3em}

\noindent and where the arrows are projections.  The left map is surjective and a general fiber 
is a product of multi-view varieties described by Theorem \ref{thm:multi-view}.  In particular, the fiber has
dimension $3w_{1} + 4w_{2} + 4w_{3} + 4w_{4} + 5w_{5}$.  Therefore, by \cite[Corollary 13.5]{Eis},
$\Gamma$ has dimension $11 + 3w_{1} + 4w_{2} + 4w_{3} + 4w_{4} + 5w_{5}$,
as $\dim(\textup{SO}(3, \CC)^{\times 2} \times \PP^{5}) = 11$.
Now, the second arrow is a regular map between varieties of the same dimension, because
$ 11 + 3w_{1} + 4w_{2} + 4w_{3} + 4w_{4} + 5w_{5} = 6(w_{1} + w_{2} + w_{3} + w_{4} + w_{5}) $.
So, if it is dominant, then again by \cite[Corollary 13.5]{Eis}, a general fiber has dimension 0; otherwise, a general fiber is empty.  However, note that points in a
general fiber of the second map correspond to solutions of a generic instance of the problem indexed by $w$ 
from Theorem \ref{thm:main}.  This shows that those problems generically have finitely many solutions.

We can see that generically there are no solutions with non-identical but collinear centers, as follows.  Let 
$\mathcal{C} \subset \textup{SO}(3, \CC)^{\times 2} \times \PP^{5}$ be the closed variety of configurations $(A,B,C)$
with non-identical but collinear centers.  Consider:

\vspace{-1.1em}

$$\mathcal{C} \longleftarrow \Gamma' \longrightarrow \mathcal{D}_{w}$$

\noindent where the definition of $\Gamma'$ is the definition of $\Gamma$ with 
$\textup{SO}(3, \CC)^{\times 2} \times \PP^{5}$ replaced by $\mathcal{C}$, 
and where the arrows are projections.  Here $\dim(\mathcal{C}) = 10$.   The left arrow is surjective, and
a general fiber is a product of multi-view varieties, with the same dimension as in the above case.  This dimension statement is seen by calculating
the multi-view varieties as in the proof
of Theorem \ref{thm:multi-view}, when $(A,B,C)$ have distinct, collinear centers.  It follows that
$\dim(\Gamma') = 10 + 3w_{1} + 4w_{2} + 4w_{3} + 4w_{4} + 5w_{5} < 11 + 3w_{1} + 4w_{2} + 4w_{3} + 4w_{4} + 5w_{5} = 6(w_{1} + w_{2} + w_{3} + w_{4} + w_{5}) = \dim(\mathcal{D}_{w})$ so that the right arrow is not dominant.

Finally, to see that generically there is no solution $(A,B,C)$ where the centers of $A, B, C$ are identical in $\PP^3$, 
we may mimic the above argument with another dimension count.  Calibrated configurations with identical centers 
are in bijection with $\textup{SO}(3, \CC)^{\times 2}$, because each $\mathcal{G}$-orbit has a unique representative
of the form $A = \begin{bmatrix} I_{3 \times 3} & 0 \end{bmatrix}$, $B = \begin{bmatrix} R_{2} & 0 \end{bmatrix}$, $C = \begin{bmatrix} R_{3} & 0 \end{bmatrix}$
where $R_{2}, R_{3} \in \textup{SO}(3, \CC)$.  So, analogously to before, we consider the diagram:

\vspace{-1.1em}

$$\textup{SO}(3, \CC)^{\times 2}  \longleftarrow \Gamma'' \longrightarrow \mathcal{D}_{w}$$

\noindent where the definition of $\Gamma''$ is the definition of $\Gamma$ with 
$\textup{SO}(3, \CC)^{\times 2} \times \PP^{5}$ replaced by $\textup{SO}(3, \CC)^{\times 2}$, 
and where the arrows are projections.  Again, the left arrow is surjective, and a general fiber
is a product of multi-view varieties.  Here, when $A, B, C$ have identical centers, 
a calculation as in the proof of Theorem \ref{thm:multi-view}
verifies that the dimensions of the multi-view varieties drop, as follows: 
$\dim(X_{A,B,C}^{PLL})=3, \dim(X_{A,B,C}^{LLL})= 2, \dim(X_{A,B,C}^{PPL})=3,
\dim(X_{A,B,C}^{PLP})= 3, \dim(X_{A,B,C}^{PPP})=2$.  So the dimension of a general
fiber of the left arrow is $2w_{1}+3w_{2}+3w_{3}+2w_{4}+5w_{3}$.  So
$\dim(\Gamma'') = 6 + 2w_{1}+3w_{2}+3w_{3}+2w_{4}+5w_{3} < 11 + 3w_{1} + 4w_{2} + 4w_{3} + 4w_{4} + 5w_{5} = 6(w_{1} + w_{2} + w_{3} + w_{4} + w_{5}) = \dim(\mathcal{D}_{w})$, 
whence the right arrow
is not dominant.  This completes the proof.
\end{proof}

\subsection{Trifocal tensors} \label{subsec:3.2}
In this subsection, we re-derive the trifocal tensor $T_{A,B,C} \in \CC^{3 \times 3 \times 3}$ associated to cameras $(A,B,C)$,
following the projective geometry approach of Hartley \cite{Har}.
This explains the notation in the `\textit{PLL}' bullet of Theorem \ref{thm:multi-view}, and justifies the assertion made there.
We also review how $T_{A,B,C}$ encodes other point/line images correspondences.

As in Subsection \ref{subsec:3.1}, let $A,B,C \in \CC^{3 \times 4}$ be three projective cameras, not necessarily calibrated,
and denote by $\alpha: \PP^3 \dashrightarrow \PP^{2}_{A}$, $\beta: \PP^3 \dashrightarrow \PP^{2}_{B}$,
$\gamma: \PP^3 \dashrightarrow \PP^{2}_{C}$ the corresponding linear projections.  
Let the point and lines $x \in \PP^{2}_{A}, \ell' \in (\PP^{2}_{B})^{\vee}, \ell'' \in (\PP^{2}_{C})^{\vee}$ be
given as column vectors. The pre-image $\alpha^{-1}(x)$ is a line in $\PP^3$, while
$\beta^{-1}(\ell')$ and $\gamma^{-1}(\ell'')$ are planes in $\PP^3$.  We can
characterize when these three have non-empty intersection as follows.

First, note that the plane $\beta^{-1}(\ell')$ is given by the column vector $B^{T}\ell'$
since $X \in \PP^3$ satisfies $X \in \beta^{-1}(\ell')$ if and only if $0 = \ell'^{T}BX = (B^{T}\ell')^{T}X$.
Similarly, the plane $\gamma^{-1}(\ell'')$ is given by $C^{T}\ell''$.  For the line $\alpha^{-1}(x)$, note:

\vspace{-0.4em}

$$\alpha^{-1}(x) \,\,= \bigcap_{\substack{\,\, \ell \in (\PP^{2}_{A})^{\vee} \\ \ell^{T}x = 0}} \alpha^{-1}(\ell) \,\, \subset \,\, \alpha^{-1}\langle x, \, \begin{bmatrix} 1 & 1 & 0 \end{bmatrix}^{T} \rangle  \,\,\, \cap \,\,\, \alpha^{-1}\langle x, \, \begin{bmatrix} 1 & 0 & 1 \end{bmatrix}^{T} \rangle. $$

\vspace{-0.4em}

\noindent Here $\langle \,\, \rangle$ denotes span, and auxiliary points $\begin{bmatrix} 1 & 1 & 0 \end{bmatrix}^{T}, \begin{bmatrix} 1 & 0 & 1 \end{bmatrix}^{T} \in \PP^{2}_{A}$
are simply convenient choices for this calculation.  Unless those two points and $x$ are collinear, the inclusion above is an equality, and the intersectands
in the RHS are the planes given by the column vectors $A^{T} \, [x]_{\times} \begin{bmatrix} 1 & 1 & 0 \end{bmatrix}^{T}$ and $A^{T} \, [x]_{\times} \begin{bmatrix} 1 & 0 & 1 \end{bmatrix}^{T}$.  The~notation~means~$[x]_{\times}=\begin{bmatrix} 0 & -x_{3} & x_{2} \\ x_{3} & 0 & -x_{1} \\ -x_{2} & x_{1} & 0 \end{bmatrix}$,~and $[x]_{\times}y$ gives $\langle x, y  \rangle$ for $x\neq y \in \PP^{2}_{A}$.  So, $\alpha^{-1}(x) \cap \beta^{-1}(\ell') \cap \gamma^{-1}(\ell'') \neq \emptyset$ only if:

\vspace{-0.1em}

\begin{equation} \label{eqn:trifocaldet}
\textup{det}\begin{pmatrix} A^{T} \, [x]_{\times} \begin{bmatrix} 1 \\ 1 \\ 0 \end{bmatrix} \,\, \Big | \,\, A^{T} \, [x]_{\times} \begin{bmatrix} 1 \\ 0 \\ 1 \end{bmatrix} \,\, \Big | \,\, B^{T}\ell' \,\, \Big | \,\, B^{T}\ell'' \end{pmatrix}_{4 \times 4} \ = \,\,\,\,\, 0.
\end{equation}

\vspace{0.25em}

\noindent This determinant is divisible by $(x_{1} - x_{2} - x_{3})$, since that vanishes if and only if
$x, \begin{bmatrix} 1 & 1 & 0 \end{bmatrix}^{T}, \begin{bmatrix} 1 & 0 & 1 \end{bmatrix}^{T}$ are collinear only if
the first two columns above are linearly dependent.  Hence, factoring out, we obtain a constraint
that is trilinear in $x, \ell', \ell''$, i.e., we get for some tensor $T \in \CC^{3 \times 3 \times 3}$:
$$ \sum_{1 \leq i,j,k \leq 3} T_{ijk} \, x_{i}\,\ell'_{j}\,\ell''_{k} = 0.$$

\noindent The tensor entry $T_{ijk}$ is computed by substituting into (\ref{eqn:trifocaldet}) the basis vectors 
$x = \textbf{e}_{i}, \ell' = \textbf{e}_{j}, \ell'' = \textbf{e}_{k}$.  Breaking into cases according to $i$, this yields:

\vspace{0.3cm}

\begin{itemize}
\setlength\itemsep{0.7em}
\item $T_{1ij} = {\scriptstyle \frac{1}{(1 - 0 - 0)}} \,\, \textup{det}\begin{pmatrix} \textbf{a}_{3} \, \big | -\textbf{a}_{2} \, \big | \, \textbf{b}_{j} \, \big | \, \textbf{c}_{k} \end{pmatrix} 
= \textup{det}\begin{pmatrix} \textbf{a}_{2} \, \big | \, \textbf{a}_{3} \, \big | \, \textbf{b}_{j} \, \big | \, \textbf{c}_{k} \end{pmatrix}$
\item $T_{2ij} = {\scriptstyle \frac{1}{(0 - 1 - 0)}} \,\, \textup{det}\begin{pmatrix} -\textbf{a}_{3} \, \big | \textbf{a}_{1} -\textbf{a}_{3} \, \big | \, \textbf{b}_{j} \, \big | \, \textbf{c}_{k} \end{pmatrix} 
= -\textup{det}\begin{pmatrix} \textbf{a}_{1} \, \big | \, \textbf{a}_{3} \, \big | \, \textbf{b}_{j} \, \big | \, \textbf{c}_{k} \end{pmatrix}$
\item $T_{3ij} = {\scriptstyle \frac{1}{(0 - 0 - 1)}} \,\, \textup{det}\begin{pmatrix} -\textbf{a}_{1}+\textbf{a}_{2} \, \big | \textbf{a}_{2} \, \big | \, \textbf{b}_{j} \, \big | \, \textbf{c}_{k} \end{pmatrix} 
= \textup{det}\begin{pmatrix} \textbf{a}_{1} \, \big | \, \textbf{a}_{2} \, \big | \, \textbf{b}_{j} \, \big | \, \textbf{c}_{k} \end{pmatrix}$
\end{itemize}

\vspace{0.2cm}

\noindent where $\textbf{a}_{i}$ denotes the transpose of the first row in $A$, and so on. 

At this point, we have derived formula (17.12) from \cite[pg 415]{HZ}:

\begin{defn} \label{defn:main}
Let $A,B,C$ be cameras.
Their \textup{\textbf{trifocal tensor}} $T_{A,B,C} \in \CC^{3 \times 3 \times 3}$
is computed as follows.  Form the $4\times 9$ matrix
$\begin{pmatrix} A^{T} \big | B^{T} \big | C^{T}  \end{pmatrix}$.
Then for $1 \leq i,j,k \leq 3$, the entry $(T_{A,B,C})_{ijk}$ is $(-1)^{i+1}$ times the determinant
of the $4 \times 4$ submatrix gotten by \textup{omitting} the $i^{\textup{th}}$ column from $A^{T}$,
while \textup{keeping} the $j^{\textup{th}}$ and $k^{\textup{th}}$ columns from $B^{T}$ and $C^{T}$, respectively.
If $A,B,C$ are calibrated, then $T_{A,B,C}$ is said to be a \textup{\textbf{calibrated trifocal tensor}}.
\end{defn}

\begin{rem}
Since $A, B, C \in \CC^{3 \times 4}$ are each defined only up to multiplication by a nonzero scalar,
the same is true of $T_{A,B,C} \in \CC^{3 \times 3 \times 3}$.
\end{rem}

\begin{rem}
By construction, $T_{A,B,C}(x, \ell', \ell'') := \sum_{1 \leq i,j,k \leq 3} T_{ijk} \, x_{i}\,\ell'_{j}\,\ell''_{k} = 0$ is equivalent 
to $\alpha^{-1}(x) \cap \beta^{-1}(\ell') \cap \gamma^{-1}(\ell'') \neq \emptyset$.  In particular, $T_{A,B,C} = 0$ if
and only if the centers of $A$, $B$, $C$ are all the same.  Moreover, the `\textit{PLL}' cases in 
Theorem \ref{thm:multi-view} and Lemma \ref{lem:avoid} postponed above are now immediate.
\end{rem}

So far, we have constructed trifocal tensors so that they encode point-line-line image correspondences.
Conveniently, the same tensors encode other point/line correspondences \cite{Har}, up to extraneous components.

\begin{prop}\label{prop:lineartrifocal}
Let $A,B,C$ be projective cameras.
Let $x \in \PP^{2}_{A}, x' \in \PP^{2}_{B}, x'' \in \PP^{2}_{C}$ and $\ell \in (\PP^{2}_{A})^{\vee}, \ell' \in (\PP^{2}_{B})^{\vee}, \ell'' \in (\PP^{2}_{C})^{\vee}$.
Putting $T = T_{A,B,C}$,
then $(A,B,C)$ is consistent with\textup{:}

\begin{itemize}
\setlength\itemsep{0.6em}
\item{\makebox[7.25cm]{$(x, \ell', \ell'')$ only if $T(x, \ell', \ell'') = 0$\hfill} \textup{\textbf{[PLL]}}}
\item{\makebox[7.25cm]{$(\ell, \ell', \ell'')$ only if $[\ell]_{\times}T( \--, \ell', \ell'') = 0$\hfill} \textup{\textbf{[LLL]}}}
\item{\makebox[7.25cm]{$(x, \ell', x'')$ only if $[x'']_{\times}T( x, \ell', \--) = 0$\hfill} \textup{\textbf{[PLP]}}}
\item{\makebox[7.25cm]{$(x, x', \ell'')$ only if  $[x']_{\times}T( x, \--, \ell'') = 0$\hfill} \textup{\textbf{[PPL]}}}
\item{\makebox[7.25cm]{$(x, x', x'')$ only if $[x'']_{\times}T(x, \--, \--)[x']_{\times}=0$.\hfill} \textup{\textbf{[PPP]}}}
\end{itemize}
\vspace{0.3em}
In the middle bullets, each contraction of $T$ with two vectors gives a column vector in $\CC^3$.  In the last bullet,
$T(x, \--, \--) = \sum_{i=1}^{3} x_{i}(T_{ijk})_{1\leq j,k \leq 3} \in \CC^{3\times 3}$.
\end{prop}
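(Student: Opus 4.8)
The plan is to reduce each of the five bullets to the single, already-established fact that $T_{A,B,C}(x,\ell',\ell'')=0$ if and only if the back-projected line $\alpha^{-1}(x)$, plane $\beta^{-1}(\ell')$, and plane $\gamma^{-1}(\ell'')$ have a common point in $\PP^3$ (the Remark following Definition \ref{defn:main}). The \textbf{[PLL]} bullet is then exactly that statement, since consistency of $(x,\ell',\ell'')$ with $(A,B,C)$ asks for $(X,L)\in\mathcal{F\ell}_{0,1}$ with $\alpha(X)=x$, $\beta(L)=\ell'$, $\gamma(L)=\ell''$, and the existence of a line $L\ni X$ inside both planes follows from having the point $X$ in their intersection together with $X\in\alpha^{-1}(x)$. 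So only the other four bullets need an argument, and in each case the device is to quantify the PLL relation over an auxiliary pencil of lines or points and observe that the resulting vanishing is precisely a cross-product (i.e. $[\,\cdot\,]_\times$) identity.

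The key steps, in order. First, for \textbf{[LLL]}: if $(\ell,\ell',\ell'')$ is consistent, there is a line $L\subseteq\PP^3$ with $\alpha(L)=\ell$, $\beta(L)=\ell'$, $\gamma(L)=\ell''$; every point $x$ on the image line $\ell$ (i.e. $\ell^T x=0$) back-projects to a plane $\alpha^{-1}(x)\supseteq L$, hence meets $\beta^{-1}(\ell')$ and $\gamma^{-1}(\ell'')$ along $L$, so $T(x,\ell',\ell'')=0$ for all $x$ with $\ell^T x=0$. Thus the vector $T(\--,\ell',\ell'')\in\CC^3$ is annihilated by the two-dimensional space $\{x:\ell^Tx=0\}$, which forces it to be a scalar multiple of $\ell$, i.e. $[\ell]_\times T(\--,\ell',\ell'')=0$. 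Second, \textbf{[PLP]}: consistency with $(x,\ell',x'')$ gives $(X,L)$ with $\alpha(X)=x$, $\beta(L)=\ell'$, $\gamma(X)=x''$; for any image line $\ell''$ through $x''$ (so $\ell''^T x''=0$) the plane $\gamma^{-1}(\ell'')$ contains $X$ and hence meets $\alpha^{-1}(x)\cap\beta^{-1}(\ell')\ni X$, so $T(x,\ell',\ell'')=0$ for all such $\ell''$; therefore $T(x,\ell',\--)\in\CC^3$ lies on every line through $x''$, i.e. is proportional to $x''$, which is $[x'']_\times T(x,\ell',\--)=0$. Third, \textbf{[PPL]} is symmetric to [PLP] with the roles of the second and third views swapped: for any $\ell'$ through $x'$ one gets $T(x,\ell',\ell'')=0$, hence $T(x,\--,\ell'')\parallel x'$. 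Fourth, \textbf{[PPP]}: consistency with $(x,x',x'')$ yields $X$ with $\alpha(X)=x$, $\beta(X)=x'$, $\gamma(X)=x''$; feeding in any $\ell'$ through $x'$ and any $\ell''$ through $x''$, the PLL relation gives $\big(T(x,\--,\--)\big)(\ell',\ell'')=0$ whenever $\ell'^Tx'=0$ and $\ell''^Tx''=0$; reading the $3\times3$ matrix $M:=T(x,\--,\--)$ as a bilinear form, this says $\operatorname{row-span}(M)\subseteq\langle x'\rangle$ and $\operatorname{col-span}(M)\subseteq\langle x''\rangle$ after accounting for left/right kernels, which is exactly $[x'']_\times M\,[x']_\times=0$.

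The only genuinely delicate point — and the step I expect to cause the most trouble — is the passage from ``$T(\text{vector})$ is annihilated by a codimension-one subspace of test vectors'' to ``$T(\text{vector})$ is proportional to the defining covector,'' and symmetrically for the $3\times3$ matrix in [PPP]. This is elementary linear algebra (a vector killed by $\{x:\ell^Tx=0\}$ must be a multiple of $\ell$), but one must be careful that the auxiliary lines/points really do range over a full pencil and that the relevant contraction is not identically zero for trivial reasons; the clean way to phrase it is that $v\in\CC^3$ satisfies $\ell^Tv=\cdots$ no — rather, that $\langle v,\--\rangle$ annihilating the hyperplane $\ell^\perp$ is equivalent to $v\in(\ell^\perp)^\perp=\langle\ell\rangle$, equivalently $[\ell]_\times v=0$. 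I would state this as a one-line sublemma and apply it four times. Everything else is a direct invocation of the back-projection characterization of the trifocal constraint together with the observation that a point lying in a plane is equivalent to every line/plane of the pencil through a fixed image element passing through that point.
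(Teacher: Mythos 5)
Your proposal is correct and follows essentially the same route as the paper: reduce every bullet to the back-projection characterization of $T(x,\ell',\ell'')=0$, quantify it over the pencil of points on $\ell$ (for LLL) or of lines through $x'$, $x''$ (for PLP, PPL, PPP), and then convert ``annihilated by a hyperplane of test vectors'' into the $[\,\cdot\,]_\times$ identity. The only loose spot is your phrasing of the PPP step (the condition is that the bilinear form $T(x,\--,\--)$ vanishes on $(x')^{\perp}\times(x'')^{\perp}$, i.e.\ it maps one perpendicular hyperplane into the opposite spanning line, not that its row/column spans are contained in $\langle x'\rangle$, $\langle x''\rangle$), but the intended argument is the paper's.
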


\begin{proof}
This proposition matches Table 15.1 on \cite[pg 372]{HZ}.  To be self-contained, we recall the proof.
The first bullet is by construction of $T$. 

For the second bullet, 
assume that $(\ell, \ell', \ell'')$ is consistent with $(A,B,C)$, i.e. there exists $L \in \textup{Gr}(\PP^{1}, \PP^{3})$
such that $\alpha(L) = \ell, \beta(L) = \ell', \gamma(L) = \ell''$.  Now let $y \in \ell$ be a point.
So $\alpha^{-1}(x)$ is a line in the plane $\alpha^{-1}(\ell)$ and that plane contains the line $L$.
This implies $\alpha^{-1}(x) \cap L \neq \emptyset \Rightarrow \alpha^{-1}(x) \cap \beta^{-1}(\ell') \cap \gamma^{-1}(\ell'') \neq \emptyset \Leftrightarrow T(y, \ell', \ell'') = 0$.
It follows that for $y \in \PP^{2}_{A}$, we have 
$y^{T} \ell = 0 \Rightarrow y^{T} T(\--, \ell', \ell'') = 0$. This means that $\ell$ and $T(\--, \ell', \ell'')$ 
are linearly independent, i.e. $[\ell]_{\times}T( \--, \ell', \ell'') = 0$.

The third, fourth and fifth bullets are similar.  They come from reasoning that
the consistency implies, respectively:
\vspace{0.1em}
\begin{itemize}
\setlength\itemsep{0.4em}
\item $x'' \in \kay'' \Rightarrow T(x, \ell', \kay'') = 0$
\item $x' \in \kay' \Rightarrow T(x, \kay', \ell'') = 0$
\item ${\big{(}}x' \in \kay'$ and $x'' \in \kay''{\big{)}} \Rightarrow T(x, \kay', \kay'') =0$,
\end{itemize}
\vspace{0.2em}
where $\kay' \in (\PP^{2}_{B})^{\vee}$ and $\kay'' \in (\PP^{2}_{C})^{\vee}$.
\end{proof}

\begin{rem}
The constraints in Proposition \ref{prop:lineartrifocal} are linear in $T$.  We will exploit this in Section \ref{sec:6}.
Also, in fact, image correspondences of types `\textit{LPL}', `\textit{LLP}' and `\textit{LPP}' do \textit{not} give linear constraints on $T_{A,B,C}$.
This is the reason that these types are not considered in Theorem \ref{thm:main}.
To get linear constraints nonetheless, one could permute $A,B,C$ before forming the trifocal tensor.
\end{rem}

In this subsection, we have presented a streamlined account of trifocal tensors, and 
the point/line image correspondences that they encode.  Now, we sketch the relationship
between the \textit{tight} conditions in Theorem \ref{thm:multi-view} and the \textit{necessary} conditions in Proposition \ref{prop:lineartrifocal}
for consistency.

\begin{lem}\label{lem:topdim}
Fix projective cameras $A,B,C$ with linearly independent centers.  Then the trilinearities in Proposition \ref{prop:lineartrifocal} cut out subschemes of three-factor products of $\PP^{2}$ and $(\PP^{2})^{\vee}$.  In all cases of Proposition \ref{prop:lineartrifocal}, this subscheme is reduced and contains the corresponding multi-view variety
as a top-dimensional component.
\end{lem}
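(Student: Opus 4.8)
The plan is to dispatch the \textit{PLL} case directly from Theorem~\ref{thm:multi-view}, reduce the content of the other four cases to two ideal-theoretic checks, and verify those by an equivariant reduction to three fixed cameras followed by a \macaulay computation.

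\emph{The containment.} Fix $A,B,C$ with linearly independent (hence distinct) centers and let $\tau$ denote one of the five correspondence types; write $\mathcal{J}^\tau$ for the ideal generated by the trilinear forms listed for type $\tau$ in Proposition~\ref{prop:lineartrifocal}, and $Z^\tau$ for the subscheme they define. Inside $X^\tau_{A,B,C}$ the correspondences avoiding all six epipoles form a dense open subset, since avoidance of epipoles fails only along proper closed conditions on the multi-view variety (the centers being distinct). By Lemma~\ref{lem:avoid} every such correspondence is consistent with $(A,B,C)$, so by Proposition~\ref{prop:lineartrifocal} it satisfies the trilinearities; as these are closed conditions, they vanish on all of $X^\tau_{A,B,C}$, and since $X^\tau_{A,B,C}$ is reduced we obtain a closed immersion $X^\tau_{A,B,C}\hookrightarrow Z^\tau$. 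When $\tau$ is the type \textit{PLL} this already finishes the proof, because Theorem~\ref{thm:multi-view} identifies $\mathcal{J}^{PLL}=\langle T_{A,B,C}(x,\ell',\ell'')\rangle$ with the prime ideal of $X^{PLL}_{A,B,C}$, so $Z^{PLL}=X^{PLL}_{A,B,C}$ is reduced and irreducible of dimension $5$.

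\emph{Reducing the remaining cases.} For $\tau\in\{LLL,\,PPL,\,PLP,\,PPP\}$ it remains to show that $Z^\tau$ is reduced with $X^\tau_{A,B,C}$ a top-dimensional component, and for this it suffices to prove that $\mathcal{J}^\tau$ is a radical ideal and that $\dim Z^\tau=\dim X^\tau_{A,B,C}$ (namely $4,4,4,3$ by Theorem~\ref{thm:multi-view}). Indeed, granting the dimension equality, the irreducible subvariety $X^\tau_{A,B,C}\subseteq Z^\tau$ cannot be properly contained in an irreducible component of $Z^\tau$, as such a component would be irreducible of the same dimension and hence equal to $X^\tau_{A,B,C}$; thus $X^\tau_{A,B,C}$ is itself a component, of maximal dimension. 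Both properties of $\mathcal{J}^\tau$ are unaffected when $(A,B,C)$ is replaced by $(gAh,g'Bh,g''Ch)$ with $g,g',g''\in\textup{SL}(3,\CC)$ and $h\in\textup{SL}(4,\CC)$: exactly as in the proof of Theorem~\ref{thm:multi-view}, $T_{A,B,C}$ is unchanged under the $h$-action and transforms by the natural representations under $g,g',g''$, so $\mathcal{J}^\tau$ undergoes the same linear change of coordinates on the ambient product of copies of $\PP^2$ and $(\PP^2)^\vee$ as does $I(X^\tau_{A,B,C})$. Since the centers are linearly independent we may therefore move $A,B,C$ to the three standard cameras from the proof of Theorem~\ref{thm:multi-view}; for those, we compute $T_{A,B,C}$ via Definition~\ref{defn:main}, form $\mathcal{J}^\tau$, and verify in \macaulay that $\mathcal{J}^\tau$ is radical, that $\dim Z^\tau$ equals $4,4,4,3$ respectively, and that the multi-view prime of Theorem~\ref{thm:multi-view} appears among the minimal primes of $\mathcal{J}^\tau$.

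The main obstacle is the dimension equality, i.e.\ ruling out \emph{extraneous} components of $Z^\tau$ of dimension at least $\dim X^\tau_{A,B,C}$. Geometrically these spurious pieces arise from degenerate configurations — image features meeting epipoles, or rank drops in the partial contractions of $T_{A,B,C}$ that do not reflect consistency of an actual correspondence — and a priori each must be checked not to reach the dimension of the multi-view variety; the \macaulay dimension computation settles this. Once the explicit ideal $\mathcal{J}^\tau$ has been written down, its radicality is a comparatively routine verification.
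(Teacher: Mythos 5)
Your proposal is correct and follows essentially the same route as the paper: reduce by the $\textup{SL}(3,\CC)^{\times 3}\times\textup{SL}(4,\CC)$ equivariance to the standard three cameras from the proof of Theorem \ref{thm:multi-view}, then certify radicality and the (top-dimensional) appearance of the multi-view prime by a \macaulay primary-decomposition computation. The paper's proof is exactly this computation stated tersely (it saturates by the irrelevant ideal before decomposing, which is immaterial for the subscheme of the multiprojective product); your added justifications of the containment $X^\tau_{A,B,C}\subseteq Z^\tau$ and of the PLL case are correct elaborations of steps the paper leaves implicit.
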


\begin{proof}
Without loss of generality, $A, B, C$ are
in the special position from the proof of Theorem \ref{thm:multi-view}.
Then using \texttt{Macaulay2}, we form the ideal 
generated by the trilinearities of Proposition \ref{prop:lineartrifocal} and saturate with respect
to the irrelevant ideal.  This leaves a radical ideal; we compute its primary decomposition.  
\end{proof}

For example, in the case of `\textit{PPP}', the trilinearities from Proposition \ref{prop:lineartrifocal}
generate a radical ideal in $\CC[x_i, x'_j, x''_k]$ that is the intersection of:
\begin{itemize}
\setlength\itemsep{0.15em}
\item the 3 irrelevant ideals for each factor of $\PP^{2}$
\item 2 linear ideals of codimension $4$
\item the multi-view ideal $I(X^{PPP}_{A,B,C})$.
\end{itemize}
This discrepancy between the trifocal and multi-view conditions 
for `\textit{PPP}' correspondences was studied in \cite{TPH}.
To demonstrate our main result, in Section \ref{sec:6} we shall relax the tight multi-view equations in Theorem \ref{thm:multi-view}
to the merely necessary trilinearities in Proposition \ref{prop:lineartrifocal}.  The `top-dimensional' clause in Lemma \ref{lem:topdim}, as well as Theorem \ref{thm:calibratedConfig} in Section \ref{sec:4} below, indicate that
this gives `good' approximations to the minimal problems in Theorem \ref{thm:main}.

\section{Configurations}\label{sec:4} In this section, it is proven
that trifocal tensors, in both the uncalibrated and calibrated case, 
are in bijection with camera triples up to the appropriate group 
action, i.e. with camera configurations.  Statements tantamount to Proposition
\ref{prop:uncalibratedConfig} are made throughout \cite[Chapter 15]{HZ} and are
well-known in the vision community,
however, we could not find any proof in the literature.
As far as our main result Theorem \ref{thm:main} is concerned,
Theorem \ref{thm:calibratedConfig} below enables
us to compute consistent calibrated trifocal tensors in exchange for
consistent calibrated camera configurations.  To our knowledge, this
theorem is new; subtly, the analog for two calibrated is false \cite[Result 9.19]{HZ}.

\begin{prop} \label{prop:uncalibratedConfig}
Let $A,B,C$ be three projective cameras, with linearly independent centers in $\PP^{3}$
Let $\widetilde{A},\widetilde{B},\widetilde{C}$ be another three projective cameras.
Then $T_{A,B,C} = T_{\widetilde{A},\,\widetilde{B},\,\widetilde{C}} \in \PP(\CC^{3 \times 3 \times 3})$ if and only if 
there exists $h \in \textup{SL}(4, \CC)$ such that $Ah = \widetilde{A}, \, Bh = \widetilde{B}, \, Ch = \widetilde{C} \, \in \, \PP(\CC^{3 \times 4})$.
\end{prop}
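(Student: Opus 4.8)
The plan is to prove the two implications separately, the forward one being a short computation with the determinantal formula and the converse being the real content. For the ``if'' direction, suppose $Ah=\widetilde A$, $Bh=\widetilde B$, $Ch=\widetilde C$ in $\PP(\CC^{3\times 4})$ for some $h\in\textup{SL}(4,\CC)$, say $Ah=\rho_{A}\widetilde A$, $Bh=\rho_{B}\widetilde B$, $Ch=\rho_{C}\widetilde C$ with $\rho_{A},\rho_{B},\rho_{C}\neq0$. Every $4\times 4$ submatrix entering Definition~\ref{defn:main} retains two columns of $A^{T}$, one of $B^{T}$, and one of $C^{T}$; since $(Ah)^{T}=h^{T}A^{T}$, and likewise for $B$ and $C$, passing from $(A,B,C)$ to $(Ah,Bh,Ch)$ left-multiplies each such submatrix by $h^{T}$, so it scales every entry of the trifocal tensor by $\det(h)=1$, while rescaling the cameras by the $\rho$'s rescales the tensor by a single nonzero constant. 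Hence $T_{\widetilde A,\widetilde B,\widetilde C}=T_{A,B,C}$ in $\PP(\CC^{3\times 3\times 3})$.

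For the ``only if'' direction I would first normalize. By the ``if'' direction, right multiplication of a camera triple by an element of $\textup{SL}(4,\CC)$ changes its trifocal tensor only by a scalar and preserves both the ranks of the cameras and the linear independence of their centers; since each camera has rank $3$, I may act on $(A,B,C)$ and, separately, on $(\widetilde A,\widetilde B,\widetilde C)$ to bring both first cameras to $[I_{3}\mid 0]$, and then rescale $B$ so that $T_{A,B,C}=T_{\widetilde A,\widetilde B,\widetilde C}=:T$ exactly (undoing these two normalizations at the end returns the required $h$ for the original triples). Write $B=[B_{0}\mid \beta]$, $C=[C_{0}\mid \gamma]$, $\widetilde B=[\widetilde B_{0}\mid \widetilde\beta]$, $\widetilde C=[\widetilde C_{0}\mid \widetilde\gamma]$ with $3\times 3$ left blocks; by Definition~\ref{defn:epi}, $\beta=\textbf{e}_{2 \leftarrow 1}$ and $\gamma=\textbf{e}_{3 \leftarrow 1}$ are the images of $\ker A=\langle(0,0,0,1)^{T}\rangle$ under $B$ and $C$, and similarly for $\widetilde\beta,\widetilde\gamma$. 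Expanding Definition~\ref{defn:main} with $A=[I_{3}\mid 0]$ shows that the $i$-th slice of $T$ is the rank-$\leq 2$ matrix
$$
T_{i}:=(T_{ijk})_{1\le j,k\le 3}=(B_{0})_{i}\,\gamma^{T}-\beta\,(C_{0})_{i}^{T},
$$
where $(B_{0})_{i},(C_{0})_{i}$ denote $i$-th columns, and the same identity holds with tildes throughout.

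Next, the hypothesis that $A,B,C$ have linearly independent (hence distinct and non-collinear) centers forces $\beta,\gamma\neq0$ and, through the standard retrieval of the two epipoles from the trifocal tensor \cite[Chapter~15]{HZ}, pins down $\langle\beta\rangle$ and $\langle\gamma\rangle$ as the distinguished lines singled out by the rank structure of the slices $T_{1},T_{2},T_{3}$ and of their linear combinations. I expect \emph{this} step---checking that linear independence of the centers guarantees the ranks that make the retrieval unambiguous---to be the main obstacle; it should succumb to a short case analysis, or to a symbolic verification in the normalized position along the lines of the proof of Theorem~\ref{thm:multi-view}. Applying the retrieval to the common tensor $T$ then yields $\widetilde\beta=\lambda\beta$ and $\widetilde\gamma=\mu\gamma$ for some nonzero $\lambda,\mu$.

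Finally I would compare the slice identities. Substituting $\widetilde\beta=\lambda\beta$, $\widetilde\gamma=\mu\gamma$ into the tilde identity and subtracting the untilde one gives, for each $i$,
$$
\bigl((B_{0})_{i}-\mu(\widetilde B_{0})_{i}\bigr)\gamma^{T}=\beta\,\bigl((C_{0})_{i}-\lambda(\widetilde C_{0})_{i}\bigr)^{T}.
$$
The left-hand side has column space inside the span of $(B_{0})_{i}-\mu(\widetilde B_{0})_{i}$ and the right-hand side inside $\langle\beta\rangle$; as $\beta,\gamma\neq0$, this forces $(B_{0})_{i}-\mu(\widetilde B_{0})_{i}=w_{i}\beta$ and $(C_{0})_{i}-\lambda(\widetilde C_{0})_{i}=w_{i}\gamma$ for a common scalar $w_{i}$. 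Writing $w=(w_{1},w_{2},w_{3})^{T}$, we get $\widetilde B=\bigl[\tfrac1\mu(B_{0}-\beta w^{T})\mid\lambda\beta\bigr]$ and $\widetilde C=\bigl[\tfrac1\lambda(C_{0}-\gamma w^{T})\mid\mu\gamma\bigr]$, so that the matrix $h=\begin{pmatrix} s\,I_{3} & 0 \\ -s\,w^{T} & t\end{pmatrix}$ satisfies $Bh=[s(B_{0}-\beta w^{T})\mid t\beta]\sim\widetilde B$, $Ch=[s(C_{0}-\gamma w^{T})\mid t\gamma]\sim\widetilde C$, and $Ah\sim[I_{3}\mid 0]=\widetilde A$ once we take $s=r/\mu$, $t=r\lambda$ with $r$ chosen so that $\det h=s^{3}t=1$. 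Undoing the two normalizations produces the required $h\in\textup{SL}(4,\CC)$ for the original triples.
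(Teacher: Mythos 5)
Your route is genuinely different from the paper's: the paper uses the full left--right symmetry (the transformation laws (\ref{eqn:group})) to push both triples into a very special coordinate position and then literally solves the resulting monomial/binomial system for $\widetilde B,\widetilde C$ (finding a one-parameter family and exhibiting $h=\lambda^{-3/4}\operatorname{diag}(\lambda,\lambda,\lambda,1)$), whereas you normalize only the first cameras to $[I_3\mid 0]$, use the slice decomposition $T_i=(B_0)_i\gamma^T-\beta (C_0)_i^T$, and reconstruct $h$ from the epipoles. Your ``if'' direction is fine, and your endgame is correct: the identity $\bigl((B_0)_i-\mu(\widetilde B_0)_i\bigr)\gamma^T=\beta\bigl((C_0)_i-\lambda(\widetilde C_0)_i\bigr)^T$ does force a common scalar $w_i$, and the block matrix $h=\begin{pmatrix} sI_3 & 0\\ -sw^T & t\end{pmatrix}$ with $t/s=\lambda\mu$ and $\det h=s^3t=1$ does the job, with the normalizations undone at the end.

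The step you yourself flag is, however, a genuine gap as written, and it is exactly where the content lies. You assert, citing the epipole-retrieval results of \cite[Chapter 15]{HZ}, that the common tensor $T$ pins down $\langle\beta\rangle$ and $\langle\gamma\rangle$, hence $\widetilde\beta=\lambda\beta$ and $\widetilde\gamma=\mu\gamma$; but you neither verify the rank conditions that make the retrieval unambiguous nor address the asymmetry of the hypotheses: linear independence of centers is assumed only for $(A,B,C)$, while about $(\widetilde A,\widetilde B,\widetilde C)$ you know nothing beyond full rank. So ``applying the retrieval to $T$'' does not automatically return the tilde epipoles --- for instance you must first rule out $\widetilde\beta=0$ and $\widetilde\gamma=0$ (coincident centers on the tilde side), since in that case every slice of the tilde tensor has rank at most $1$ and $\widetilde C_0$ would not even enter $T$. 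The gap is fillable along the lines you anticipate, but the argument must run from properties of $T$ established on the untilde side to constraints on the tilde side: for generic $x$ the contraction $T(x,-,-)=(B_0x)\gamma^T-\beta(C_0x)^T$ has rank $2$ and column space exactly $\operatorname{span}\{B_0x,\beta\}\ni\beta$ (using $\beta,\gamma\neq 0$ and $\operatorname{rk}B=\operatorname{rk}C=3$), and intersecting over two generic $x$ gives precisely $\langle\beta\rangle$ because $\operatorname{rk}[B_0\mid\beta]=3$; the rank-$2$ statement rules out $\widetilde\beta=0$, and since the same column space is contained in $\operatorname{span}\{\widetilde B_0x,\widetilde\beta\}$, intersecting over generic $x$ (now using only $\operatorname{rk}\widetilde B=3$) forces $\beta\in\langle\widetilde\beta\rangle$; the row-space argument gives $\gamma\propto\widetilde\gamma$. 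With that supplied your proof closes, and it is arguably more conceptual than the paper's explicit elimination; note it only ever uses that the centers of $B$ and $C$ differ from that of $A$, not full linear independence.
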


\begin{proof}
As in the proof of Theorem \ref{thm:multi-view}, for $g, g', g'' \in \textup{SL}(3, \CC), \, h \in \textup{SL}(4, \CC)$:

\vspace{-0.8em}

\begin{equation}\label{eqn:group}
T_{gA, \, g'B, \, g''C} = (g, \wedge^{2} g', \wedge^{2} g'') \cdot T_{A,B,C} \,\,\, \textup{ and } \,\,\, T_{Ah, \, Bh, \, Ch} = T_{A, B, C}.
\end{equation}

\vspace{0.1em}

\noindent The second equality gives the `if' direction.  Conversely, for `only if',  
for any $g, g', g'' \in \textup{SL}(3, \CC)$, $h_{1}, h_{2} \in \textup{SL}(4, \CC)$, we are free to replace $(A,B,C)$ by $(gAh_{1}, g'Bh_{1}, g''Ch_{1})$ and to replace
$(\widetilde{A}, \widetilde{B}, \widetilde{C})$ by $(g\widetilde{A}h_{2}, g'\widetilde{B}h_{2}, g''\widetilde{C}h_{2})$, and then to
exhibit an $h$ as in the proposition.  Hence we may assume that:

$$A = \begin{bmatrix} 1 & 0 & 0 & 0 \\ 0 & 1 & 0 & 0 \\ 0 & 0 & 1 & 0 \end{bmatrix}\!, \,\,
B = \begin{bmatrix} 1 & 0 & 0 & 0 \\ 0 & 1 & 0 & 0 \\ 0 & 0 & 0 & 1 \end{bmatrix}\!, \,\,
C = \begin{bmatrix} 1 & 0 & 0 & 0 \\ 0 & 0 & 1 & 0 \\ 0 & 0 & 0 & 1 \end{bmatrix}$$

\vspace{0.4em}

$$\widetilde{A} = \begin{bmatrix} 1 & 0 & 0 & 0 \\ 0 & 1 & 0 & 0 \\ 0 & 0 & 1 & 0 \end{bmatrix}\!, \,\,
\widetilde{B} = \begin{bmatrix} \ast & \ast & \ast & \ast \\ \ast & \ast & \ast & \ast \\ 0 & 0 & 0 & 1 \end{bmatrix}\!, \,\,
\widetilde{C} = \begin{bmatrix} \ast & \ast & \ast & \ast \\ \ast & \ast & \ast & \ast \\ \ast & \ast & \ast & \ast \end{bmatrix}$$

\vspace{0.6em}

\noindent where each `$\ast$' denotes an indeterminate.  Now consider the nine equations:

\vspace{-0.5em}

$$(T_{A,B,C})_{i\,3\,k} = (T_{\widetilde{A},\, \widetilde{B},\, \widetilde{C}})_{i\,3\,k}$$

\vspace{0.4em}

\noindent where $1 \leq i, k \leq 3$.  Under the above assumptions, these are linear and in the nine unknowns $\widetilde{c}_{lm}$
for $1 \leq l, m \leq 3$.  Here we have fixed the nonzero scale on $\widetilde{C}$ so that these are indeed equalities,
on the nose.  It follows that:

\vspace{-0.4em}

$$\widetilde{C} = \begin{bmatrix} 1 & 0 & 0 & \ast \\ 0 & 0 & 1 & \ast \\ 0 & 0 & 0 & \ast \end{bmatrix}.$$

\vspace{0.2em}

\noindent At this point, we have reduced to solving $18$ equations in 11 unknowns:

\vspace{-0.7em}

$$(T_{A,B,C})_{i\,j\,k} = (T_{\widetilde{A},\, \widetilde{B},\, \widetilde{C}})_{i\,j\,k}$$

\vspace{0.2em}

\noindent where $1 \leq i, k \leq 3$ and $1 \leq j \leq 2$.  These equations are quadratic monomials
and binomials.  The system is simple to solve by hand or with \texttt{Macaulay2}:

\vspace{-0.4em}

$$\widetilde{A} = \begin{bmatrix} 1 & 0 & 0 & 0 \\ 0 & 1 & 0 & 0 \\ 0 & 0 & 1 & 0 \end{bmatrix}\!, \,\,
\widetilde{B} = \begin{bmatrix} \lambda & 0 & 0 & 0 \\ 0 & \lambda & 0 & 0 \\ 0 & 0 & 0 & 1 \end{bmatrix}\!, \,\,
\widetilde{C} = \begin{bmatrix} 1 & 0 & 0 & 0 \\ 0 & 0 & 1 & 0 \\ 0 & 0 & 0 & \lambda^{-1} \end{bmatrix}$$

\vspace{0.2em}

\noindent for $\lambda \in \CC^{*}$.  Now taking $h = \lambda^{-3/4} \, \textup{diag}(\lambda, \lambda, \lambda, 1) \in \textup{SL}(4, \CC)$
gives $Ah = \widetilde{A}, Bh = \widetilde{B}, Ch = \widetilde{C} \in \PP(\CC^{3 \times 4})$, as desired.  This completes the proof.
\end{proof}

With a bit of work, we can promote Proposition \ref{prop:uncalibratedConfig} to the calibrated case.

\begin{thm} \label{thm:calibratedConfig} 

Let $A,B,C$ be three calibrated cameras, with linearly independent centers in $\PP^{3}$.
Let $\widetilde{A},\widetilde{B},\widetilde{C}$ be another three calibrated cameras.
Then $T_{A,B,C} = T_{\widetilde{A},\,\widetilde{B},\,\widetilde{C}} \in \PP(\CC^{3 \times 3 \times 3})$ if and only if 
there exists $h \in \mathcal{G}$ (where $\mathcal{G}$ is defined on page \textup{2}) such that $Ah = \widetilde{A}, \, Bh = \widetilde{B}, \, Ch = \widetilde{C} \, \in \, \PP(\CC^{3 \times 4})$.

\end{thm}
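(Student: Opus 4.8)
The plan is to reduce to Proposition~\ref{prop:uncalibratedConfig} and then pin down which elements $h \in \mathrm{SL}(4,\CC)$ are forced to lie in $\mathcal G$. The ``if'' direction is immediate: if $h \in \mathcal G$, then right multiplication by $h$ preserves calibration of each camera (as noted on page~2), and $T_{Ah,Bh,Ch} = T_{A,B,C}$ by the second identity in \eqref{eqn:group} (which is stated for $h \in \mathrm{SL}(4,\CC)$, and $\mathcal G$ need not sit inside $\mathrm{SL}(4,\CC)$ on the nose, but the trifocal tensor is only defined projectively, so rescaling $h$ to have determinant $1$ changes $T_{Ah,Bh,Ch}$ only by a nonzero scalar). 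So the content is the ``only if'' direction.

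For ``only if'', suppose $T_{A,B,C} = T_{\widetilde A,\widetilde B,\widetilde C}$ in $\PP(\CC^{3\times3\times3})$. Since $A,B,C$ have linearly independent centers and are in particular projective cameras, Proposition~\ref{prop:uncalibratedConfig} applies: there exists $h \in \mathrm{SL}(4,\CC)$ with $Ah = \widetilde A$, $Bh = \widetilde B$, $Ch = \widetilde C$ in $\PP(\CC^{3\times 4})$. Write $h = \begin{pmatrix} M & v \\ w^T & d \end{pmatrix}$ with $M \in \CC^{3\times 3}$, $v,w \in \CC^3$, $d \in \CC$. I need to show we may choose $h$ (possibly a different representative, absorbing scalars) so that $M \in \mathrm{SO}(3,\CC)$, $w = 0$, and $d \neq 0$ — i.e. $h \in \mathcal G$ up to the harmless overall scalar. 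The idea is to exploit calibration of both triples. Let $A = \begin{bmatrix} R_A & t_A\end{bmatrix}$ with $R_A \in \mathrm{SO}(3,\CC)$, and likewise for $\widetilde A = \begin{bmatrix} R_{\widetilde A} & t_{\widetilde A}\end{bmatrix}$. The relation $A h = \mu_A \widetilde A$ for some scalar $\mu_A \in \CC^*$ reads $R_A M + t_A w^T = \mu_A R_{\widetilde A}$ and $R_A v + d\, t_A = \mu_A t_{\widetilde A}$, and there are analogous pairs of equations for $B$ and $C$ with their own scalars $\mu_B, \mu_C$.

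The key step is to extract, from the three ``rotation'' equations $R_X M + t_X w^T = \mu_X R_{\widetilde X}$ for $X \in \{A,B,C\}$, the conclusions $w = 0$ and $M \in \CC^*\cdot\mathrm{SO}(3,\CC)$. From the first, $M + R_A^{-1} t_A w^T = \mu_A R_A^{-1} R_{\widetilde A}$, so $M$ equals a scalar multiple of a rotation plus a rank-$\le 1$ term; subtracting the $A$ and $B$ versions gives $(R_A^{-1}t_A - R_B^{-1}t_B) w^T = \mu_A R_A^{-1}R_{\widetilde A} - \mu_B R_B^{-1}R_{\widetilde B}$, and since the centers of $A,B,C$ are linearly independent, the vectors $R_A^{-1}t_A$, $R_B^{-1}t_B$, $R_C^{-1}t_C$ (which encode the camera centers in the natural affine chart) are in ``general position'' in a precise sense — I expect that the rank-$1$-versus-difference-of-invertibles clash forces $w = 0$. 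Once $w = 0$, each equation becomes $R_X M = \mu_X R_{\widetilde X}$, so $M = \mu_X R_X^{-1}R_{\widetilde X}$; comparing two of these forces all the $\mu_X$ equal (to some common $\mu$), and then $M/\mu \in \mathrm{SO}(3,\CC)$. Finally $d \neq 0$ because $h$ is invertible and its top-left block is already nonsingular; rescaling $h$ by $\mu^{-1}$ (harmless projectively) puts it in $\mathcal G$. The main obstacle I anticipate is the linear-algebra argument forcing $w = 0$: one must use all three cameras and the linear-independence-of-centers hypothesis carefully — with only two cameras the analogous statement fails (this is precisely the failure of ``Result 9.19'' in \cite{HZ} alluded to in the text), so the proof must genuinely invoke the third camera, e.g. by showing that $w \neq 0$ would force two of the three centers to coincide or the three centers to be collinear, contradicting linear independence. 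An alternative, possibly cleaner, route to the whole ``only if'' direction is to mimic the explicit normal-form computation in the proof of Proposition~\ref{prop:uncalibratedConfig}: move $(A,B,C)$ and $(\widetilde A,\widetilde B,\widetilde C)$ into canonical calibrated position using $\mathcal G$ on each side, solve the resulting polynomial equations $(T_{A,B,C})_{ijk} = (T_{\widetilde A,\widetilde B,\widetilde C})_{ijk}$ — now with the rotation constraints imposed — by hand or in \macaulay, read off that the solution set is a one-parameter family analogous to the $\lambda$-family there, and check that the connecting $h$ lands in $\mathcal G$. I would present the conceptual argument as the main proof and mention the computational verification as a cross-check.
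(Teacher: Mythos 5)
Your overall strategy (reduce to Proposition~\ref{prop:uncalibratedConfig}, then show the connecting $h$ must lie in $\mathcal G$) is exactly the paper's, and your ``if'' direction and the endgame ($w=0$ forces $M$ to be a common scalar times a rotation, $d\neq 0$ from block-triangular invertibility) are fine. But the heart of the matter — proving $w=0$ — is left as an expectation rather than an argument, and the mechanism you propose for it does not suffice. Subtracting two camera equations gives $(R_A^{-1}t_A - R_B^{-1}t_B)\,w^T = \mu_A R_A^{-1}R_{\widetilde A} - \mu_B R_B^{-1}R_{\widetilde B}$, and you hope a ``rank-$1$ versus difference of invertibles'' clash kills $w$; it cannot, because a difference of scaled complex rotations can perfectly well have rank one — e.g.\ $R+I$ has rank $1$ when $R$ is a half-turn, which is precisely the degeneracy behind the two-view failure you yourself cite. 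So pairwise comparisons alone leave open exactly the case that must be excluded, and your sketch never says how the third camera closes it.

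The paper closes it as follows, and this is the piece your proposal is missing. After normalizing (using the left $\mathrm{SO}(3)^{\times 3}$-equivariance and the $\mathcal G$-action on each triple) so that $A=[\,I\,|\,0\,]$, $B=[\,I\,|\,s_1\,]$, $C=[\,I\,|\,s_2\,]$ and $\widetilde A=[\,I\,|\,0\,]$, the matrix from Proposition~\ref{prop:uncalibratedConfig} has the form $h'=\begin{bmatrix} I & 0\\ u^T & \lambda\end{bmatrix}$, and $Bh'=\widetilde B$ gives $\mu_1(I+s_1u^T)=R_1\in\mathrm{SO}(3,\CC)$. Then $R_1-\mu_1 I$ has rank at most $1$, i.e.\ $\mu_1$ is an eigenvalue of $R_1$ of geometric multiplicity at least $2$, forcing $\mu_1=\pm 1$. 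If $\mu_1=1$ then $s_1u^T=0$, so ($u\neq 0$) $s_1=0$ and $A,B$ share a center — contradiction. If $\mu_1=-1$ then $R_1$ is a half-turn about $u$ and the translation is \emph{pinned down}: $s_1=2u/(u^Tu)$. Running the identical analysis on $C$ gives $s_2=2u/(u^Tu)$ as well, so $B$ and $C$ share a center, contradicting linear independence of the three centers; hence $u=0$. Note how the contradiction is not a rank obstruction at all but the fact that a nonzero $u$ determines the translation uniquely, so two distinct cameras cannot both tolerate it. Your alternative route (redoing the normal-form computation of Proposition~\ref{prop:uncalibratedConfig} with the rotation constraints imposed, by hand or in \macaulay) would also work in principle, but as written it too is only a plan, not a proof.
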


\begin{proof}
The `if' direction is from Proposition \ref{prop:uncalibratedConfig}.  For `only if', here
for any $g, g', g'' \in \textup{SO}(3, \CC)$, $h_{1}, h_{2} \in \mathcal{G}$, we are free to replace $(A,B,C)$ by $(gAh_{1}, g'Bh_{1}, g''Ch_{1})$ and to replace
$(\widetilde{A}, \widetilde{B}, \widetilde{C})$ by $(g\widetilde{A}h_{2}, g'\widetilde{B}h_{2}, g''\widetilde{C}h_{2})$, and then to
exhibit an $h \in \mathcal{G}$ as above.  In this way, we may assume that:

\vspace{-0.5em}

$$A = \begin{bmatrix} I_{3 \times 3} & 0  \end{bmatrix}\!, \,\,
B = \begin{bmatrix} I_{3 \times 3} & s_{1} \end{bmatrix}\!, \,\, 
C = \begin{bmatrix} I_{3 \times 3} & s_{2} \end{bmatrix} $$

\vspace{0.05em}

$$\widetilde{A} = \begin{bmatrix} I_{3 \times 3} & 0  \end{bmatrix}\!, \,\,
\widetilde{B} = \begin{bmatrix} R_{1} & t_{1} \end{bmatrix}\!, \,\, 
\widetilde{C} = \begin{bmatrix} R_{2} & t_{2} \end{bmatrix} $$

\vspace{0.4em}

\noindent where $R_{1}, R_{2} \in \textup{SO}(3, \CC)$ and $s_{1}, s_{2}, t_{1}, t_{2} \in \CC^{3}$.
Now from Proposition \ref{prop:uncalibratedConfig}, there exists $h' \in \textup{SL}(4, \CC)$ such 
that $Ah' = \widetilde{A}, \, Bh' = \widetilde{B}, \, Ch' = \widetilde{C} \in \PP(\CC^{3 \times 3})$.
From the first equality, it follows that $h' = \begin{bmatrix} I_{3\times 3} & 0 \\[1pt] u^{T} & \lambda \end{bmatrix} \in \PP(\CC^{4 \times 4})$
for some $u \in \CC^{3}, \, \lambda \in \CC^{*}$.  
It suffices to show that $u = 0$, so $h' \in \mathcal{G}$.
By way of contradiction, let us assume that $u \neq 0$.
Substituting into $Bh' = \widetilde{B}$ gives:

\vspace{-0.5em}

$$ \begin{bmatrix} I_{3 \times 3} & s_{1} \end{bmatrix} \, \begin{bmatrix} I_{3 \times 3} & 0 \\[1pt] u^{T} & \lambda \end{bmatrix} \,\, = \,\,
\begin{bmatrix} I_{3 \times 3}+s_{1}u^{T} & \lambda s_{1} \end{bmatrix}
\,\, = \,\, \begin{bmatrix} R_{1} & t_{1} \end{bmatrix} \, \in \PP(\CC^{3 \times 4}).
$$

\vspace{0.3em}

\noindent In particular, there is $\mu_{1} \in \CC^{*}$ so that $\mu_{1}(I_{3 \times 3} + s_{1}u^{T}) = R_{1}.$  In particular, $R_{1} - \mu_{1}I_{3 \times 3}$ is rank at most 1.
Equivalently, $\mu_{1}$ is an eigenvalue of the rotation $R_{1} \in \textup{SO}(3, \CC)$ of geometric multiplicity at least 2.
The only possibilities are $\mu_{1} = 1$ or $\mu_{1} = -1$.  If $\mu_{1} = 1$, then
$R_{1} = I$ and $s_{1} u^{T} = 0$.  From $u \neq 0$, we get that $s_{1} = 0$; but then $A = B$, contradicting linear independence of the 
centers of $A, B, C$.  So in fact $\mu_{1} = -1$.  Now $R_{1}$ is a $180\degree$ rotation.  From $R_{1} + I_{3 \times 3} = s_{1}u^{T} \in \CC^{3 \times 3}$,
it follows that the axis of rotation is the line through $u$, and $s_{1} =  \frac{2u}{u^{T}u}$. 
The exact same analysis holds starting from $Ch' = \widetilde{C}$.   So in particular, $s_{2} = \frac{2u}{u^{T}u}$.
But now $B = C$, contradicting linear independence of the 
centers of $A, B, C$.  We conclude that $u=0$. 
\end{proof}

\section{Varieties} \label{sec:5}
So far in Subsection \ref{subsec:3.2} and Section \ref{sec:4}, we have worked with individual trifocal tensors, uncalibrated or calibrated. 
This is possible once a camera configuration $(A,B,C)$ is given. 
To determine an unknown camera configuration from image data, we need to work with the set of all trifocal tensors.

\begin{defn} \label{defn:trifocal}
The \textup{\textbf{trifocal variety}}, denoted $\mathcal{T} \subset \PP(\CC^{3 \times 3 \times 3})$, is defined to be the Zariski closure
of the image of the following rational map\textup{:}

\vspace{-1.2em}
$$
\PP(\CC^{3 \times 4}) \times \PP(\CC^{3 \times 4}) \times \PP(\CC^{3 \times 4}) \dashrightarrow \PP(\CC^{3 \times 3 \times 3}), \,\,\, (A,B,C) \mapsto T_{A,B,C} $$
\vspace{-1em}
$$\textup{where } \,
(T_{A,B,C})_{ijk} := (-1)^{i+1} \textup{det} \begin{bmatrix} \sim\textup{\textbf{a}}_{i} \\ \textup{\textbf{b}}_{j} \\ \textup{\textbf{c}}_{k} \end{bmatrix}_{ 4 \times 4} \!\!\!\!\!\!  \textup{ for } 1 \leq i,j,k \leq 3.
$$

\vspace{-0.2em}

\noindent Here $\sim\textup{\textbf{a}}_{i}$ is gotten from $A$ by omitting the $i^{\textup{th}}$ row, and $\textup{\textbf{b}}_{j}, \textup{\textbf{c}}_{k}$ are the $j^{\textup{th}}, k^{\textup{th}}$ rows of $B, C$ respectively.  So, $\mathcal{T}$ is the closure of the set of all
trifocal tensors.

\end{defn}

\begin{defn} \label{defn:caltrifocal}
The \textup{\textbf{calibrated trifocal variety}}, denoted $\mathcal{T}_{\textup{cal}} \subset \PP(\CC^{3 \times 3 \times 3})$, is defined to be the Zariski closure
of the image of the following rational map\textup{:}
\vspace{-0.8em}
$$
(\textup{SO}(3, \CC) \times \CC^{3}) \, \times \, (\textup{SO}(3, \CC) \times \CC^{3}) \, \times \, (\textup{SO}(3, \CC) \times \CC^{3}) 
\, \dashrightarrow \, \PP(\CC^{3 \times 3 \times 3}), 
$$
$$
\,\,\, \Big{(}(R_{1}, t_{1}),\,(R_{2}, t_{2}),\,(R_{3}, t_{3})\Big{)} \mapsto T_{[R_{1} | t_{1}],\,[R_{2} | t_{2}],\,[R_{3} | t_{3}]}
$$
\!where the formula for $T$ is as in Definitions \textup{\ref{defn:main}} and \textup{\ref{defn:trifocal}}.  So, $\mathcal{T}_{\textup{cal}}$ is the
closure of the set of all calibrated trifocal tensors.
\end{defn}

In the remainder of this paper, the calibrated trifocal variety $\mathcal{T}_{\textup{cal}}$ is the main actor.  It has recently been studied independently by Martyushev \cite{Mar} and Matthews \cite{Mat}.  They both obtain implicit quartic equations for $\mathcal{T}_{\textup{cal}}$.
However, a full set of ideal generators for $I(\mathcal{T}_{\textup{cal}}) \subset \CC[T_{ijk}]$ is currently
not known.
We summarize the state of knowledge on implicit equations for $\mathcal{T}_{\textup{cal}}$:

\begin{prop}\label{prop:implicit}
The prime ideal of the calibrated trifocal variety $I(\mathcal{T}_{\textup{cal}}) \subset \CC[T_{ijk}]$ contains the ideal
of the trifocal variety $I(\mathcal{T})$, and $I(\mathcal{T})$ is minimally generated by \textup{10} cubics, \textup{81} quintics and
\textup{1980} sextics.  Additionally, $I(\mathcal{T}_{\textup{cal}})$ contains \textup{15} linearly independent quartics that
do not lie in $I(\mathcal{T})$.
\end{prop}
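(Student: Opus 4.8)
The plan is to separate the statement into the three assertions and verify each with a mix of classical multi-view geometry and direct computer algebra, mirroring the style of the proofs of Theorem \ref{thm:multi-view} and Lemma \ref{lem:topdim}.

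First I would establish the containment $I(\mathcal{T}) \subseteq I(\mathcal{T}_{\textup{cal}})$. This is immediate from the definitions: the parametrizing map for $\mathcal{T}_{\textup{cal}}$ in Definition \ref{defn:caltrifocal} factors through the parametrizing map for $\mathcal{T}$ in Definition \ref{defn:trifocal}, since a calibrated camera $[R_i \mid t_i]$ with $R_i \in \textup{SO}(3,\CC)$ is in particular a full-rank $3 \times 4$ matrix. Hence the image of the former map is contained in the image of the latter, and taking Zariski closures gives $\mathcal{T}_{\textup{cal}} \subseteq \mathcal{T}$, so every polynomial vanishing on $\mathcal{T}$ vanishes on $\mathcal{T}_{\textup{cal}}$.

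Next I would obtain the generators of $I(\mathcal{T})$. The approach is computational: in \macaulay, implement the rational map $(A,B,C) \mapsto T_{A,B,C}$ of Definition \ref{defn:trifocal} and compute the kernel of the induced ring map $\CC[T_{ijk}] \to \CC[\text{entries of } A,B,C]$ (working generically, e.g. over a large finite field for speed, or eliminating the camera variables from the graph ideal after clearing denominators). This returns $I(\mathcal{T})$ as it is prime by irreducibility of $\mathcal{T}$. One then computes a minimal generating set, e.g. via the \texttt{mingens} command or by inspecting the degrees appearing in a minimal free resolution, and reads off that the minimal generators occur in degrees $3$, $5$, $6$ with the stated multiplicities $10$, $81$, $1980$. (This recovers the known description of the trifocal variety's ideal; one may alternatively cite the literature on the trifocal variety here.) For the final claim, I would set up the linear system expressing that a quartic form in the $T_{ijk}$ vanishes identically on $\mathcal{T}_{\textup{cal}}$: substitute the parametrization of Definition \ref{defn:caltrifocal} into a general quartic $\sum c_\alpha T^\alpha$, expand, and collect coefficients; the solution space of the resulting homogeneous linear system in the unknowns $c_\alpha$ is the degree-$4$ graded piece $I(\mathcal{T}_{\textup{cal}})_4$. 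Computing its dimension and subtracting $\dim I(\mathcal{T})_4$ (the latter being $0$, since $I(\mathcal{T})$ has no quartics — its generators start in degree $3$ but there are only $10$ cubics and one checks the degree-$4$ piece of the ideal $\langle\text{10 cubics}\rangle$ against $I(\mathcal{T}_{\textup{cal}})_4$) yields the count $15$ of new quartics. To be careful I would instead directly exhibit $15$ explicit quartics in $I(\mathcal{T}_{\textup{cal}})$ and check, again in \macaulay, that they are linearly independent modulo $I(\mathcal{T})_4$; the work of Martyushev \cite{Mar} and Matthews \cite{Mat} supplies candidate quartics to feed in.

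The main obstacle I anticipate is purely computational scale rather than conceptual: computing $I(\mathcal{T}) \subseteq \CC[T_{ijk}]$ by elimination from a parametrization in $36$ camera variables (modulo scalings, so $\sim 30$ effective variables) mapping to $27$ target variables is a heavy Gröbner basis computation, and verifying minimality of $1980$ sextic generators requires either a resolution over a large polynomial ring or careful use of Hilbert-function bookkeeping. I would mitigate this by working over a finite field of large characteristic, by exploiting the $\textup{SL}(3,\CC)^{\times 3} \rtimes \textup{SL}(4,\CC)$ symmetry from equation \eqref{eqn:group} to predict which degrees can occur, and by cross-checking the Hilbert series of $\CC[T_{ijk}]/I(\mathcal{T})$ against the known dimension ($18$) and degree of $\mathcal{T}$. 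The linear-algebra computation for the $15$ quartics is comparatively cheap and serves as an independent sanity check on the harder elimination.
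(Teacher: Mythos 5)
Your containment argument is exactly the paper's: $\mathcal{T}_{\textup{cal}}\subset\mathcal{T}$ because every calibrated camera is a camera, so $I(\mathcal{T})\subseteq I(\mathcal{T}_{\textup{cal}})$. For the other two assertions, however, the paper does not reprove anything: it cites Aholt--Oeding \cite{AO} for the minimal generation of $I(\mathcal{T})$ by $10$ cubics, $81$ quintics and $1980$ sextics, and Martyushev \cite{Mar} and Matthews \cite{Mat} for the $15$ quartics, whereas you propose to recompute these facts. Two caveats about that route. First, obtaining $I(\mathcal{T})$ as a kernel/elimination ideal and then certifying its minimal generators is essentially the content of \cite{AO}, whose proof needed the $\GL(3)^{\times 3}$ representation-theoretic structure precisely because the naive Gr\"obner computation in the $27+36$ variables is out of reach; a finite-field or random-point computation would in any case only give a probabilistic answer and would not by itself prove the characteristic-zero, ideal-theoretic statement, so your ``alternatively cite the literature'' fallback is in practice the actual proof, and it is what the paper does. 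Second, your parenthetical claim that $\dim I(\mathcal{T})_4=0$ is false: $I(\mathcal{T})_4$ contains all linear multiples of the $10$ cubics. You do self-correct by comparing $I(\mathcal{T}_{\textup{cal}})_4$ with the degree-$4$ piece of the ideal generated by the cubics (which equals $I(\mathcal{T})_4$, since $I(\mathcal{T})$ has no quartic minimal generators), and that is the right test: the ``$15$ linearly independent quartics not in $I(\mathcal{T})$'' are to be counted modulo that space. With that reading, your evaluation/linear-algebra computation of $I(\mathcal{T}_{\textup{cal}})_4$, or the cheaper check that the explicit quartics of \cite{Mar} and \cite{Mat} are independent modulo $I(\mathcal{T})_4$, is feasible and yields a more self-contained verification of the quartic claim than the paper's bare citation; the trade-off is that the $I(\mathcal{T})$ part of your plan is not realistically executable as stated and should simply be replaced by the reference to \cite{AO}.
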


The ideal containment follows from $\mathcal{T}_{\textup{cal}} \subset \mathcal{T}$, and the statement about minimal generators of $I(\mathcal{T})$ was proven by Aholt and Oeding \cite{AO}.  For the additional quartics, see \cite[Theorems 8, 11]{Mar} and \cite[Corollary 51]{Mat}.

In the rest of this paper, using \underline{numerical algebraic geometry}, we always interact with the calibrated trifocal variety 
$\mathcal{T}_{\textup{cal}}$
directly via (a restriction of) its defining parametrization.  Therefore, we do not need the ideal of implicit equations $I(\mathcal{T}_{\textup{cal}})$, nor do we use the known equations from Proposition \ref{prop:implicit}.

At this point, we discuss properties of the rational map in Definition \ref{defn:caltrifocal}.  First, since the source $(\textup{SO}(3, \CC) \times \CC^{3})^{\times 3}$ is irreducible, the closure of the image $\mathcal{T}_{\textup{cal}}$ is irreducible.  Second, the base locus of the map 
consists of triples of calibrated cameras $\big{(}[R_{1} | t_{1}],\,[R_{2} | t_{2}],\,[R_{3} | t_{3}]\big{)}$ all with the same center in $\PP^3$, by the
remarks following Definition \ref{defn:main}.  Third, the two equations in  (\ref{eqn:group}), the second line of the proof of Proposition \ref{prop:uncalibratedConfig}, mean that the rational map
in Definition \ref{defn:caltrifocal} satisfies group symmetries.  Namely, the parametrization of $\mathcal{T}_{\textup{cal}}$ is \textit{equivariant} with respect to $\textup{SO}(3, \CC)^{\times 3}$, and each of its fibers carry a $\mathcal{G}$ 
action.  In vision, these two group actions are interpreted as changing image coordinates and changing world coordinates. 
Here, by the equivariance, it follows that $\mathcal{T}_{\textup{cal}}$ is an $\textup{SO}(3, \CC)^{\times 3}$-variety.
Also, we can use the $\mathcal{G}$ action on fibers to pick out one point per fiber, and thus restrict the map in Definition \ref{defn:caltrifocal}
so that the restriction is generically injective and dominant onto $\mathcal{T}_{\textup{cal}}$.  Explicitly, we 
restrict to the domain where $[R_{1} \, | \, t_{1}] = \begin{bmatrix} I_{3 \times 3} & 0 \end{bmatrix}\!, \,\, t_{2} = \begin{bmatrix} \ast & \ast & 1 \end{bmatrix}^{T}$.
This restriction $ (\textup{SO}(3, \CC) \times \CC^{2}) \, \times \, (\textup{SO}(3, \CC) \times \CC^{3}) 
\, \dashrightarrow \, \mathcal{T}_{\textup{cal}}$ is generically injective by Theorem \ref{thm:calibratedConfig}.
Generic injectivity makes the restricted map particularly amenable to numerical algebraic geometry, where \textit{computations regarding a parametrized
variety are pulled back to the source of the parametrization}.  We now obtain the major theorem of this section using that technique:

\begin{thm} \label{thm:4912}
The calibrated trifocal variety $\mathcal{T}_{\textup{cal}} \subset \PP(\CC^{3 \times 3 \times 3})$ is irreducible, dimension $\textup{11}$ and degree $\textup{4912}$.
It equals the $\textup{SO}(3, \CC)^{\times 3}$-orbit closure generated by the following projective plane, 
parametrized by $\begin{bmatrix} \lambda_{1} & \lambda_{2} & \lambda_{3} \end{bmatrix}^{T} \in \PP^{2}\textup{:}$ 
{\footnotesize{
$$
T_{1\ast\ast} = \begin{bmatrix} 0 & \lambda_{1} & \lambda_{2} \\ 0 & 0 & 0 \\ \lambda_{1} & 0 & 0 \end{bmatrix}\!\!, \,\,\, 
T_{2\ast\ast} = \begin{bmatrix} 0 & 0 & 0 \\ 0 & \lambda_{1} & \lambda_{2} \\ 0 & \lambda_{3} & 0 \end{bmatrix}\!\!, \,\,\,
T_{3\ast\ast} = \begin{bmatrix} 0 & 0 & 0 \\ 0 & 0 & 0 \\ 0 & \lambda_{1} & \lambda_{2} + \lambda_{3} \end{bmatrix}\!\!.
$$
}}
\end{thm}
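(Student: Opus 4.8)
The plan is to read off irreducibility and dimension from the parametrization, to establish the orbit-closure description by a stability argument plus two finite checks, and to extract the degree by homotopy continuation.

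Irreducibility is immediate, since $\mathcal{T}_{\textup{cal}}$ is the closure of the image of the irreducible variety $(\textup{SO}(3,\CC)\times\CC^{3})^{\times 3}$. For the dimension, recall from the discussion preceding the theorem that, after using the $\mathcal{G}$-action on fibres, one obtains a restriction $(\textup{SO}(3,\CC)\times\CC^{2})\times(\textup{SO}(3,\CC)\times\CC^{3})\dashrightarrow\mathcal{T}_{\textup{cal}}$ that is dominant and, by Theorem \ref{thm:calibratedConfig}, generically injective. Hence $\dim\mathcal{T}_{\textup{cal}}$ equals the dimension of this source, namely $3+2+3+3=11$.

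For the orbit-closure statement, note first that left multiplication of each of $A,B,C$ by an element of $\textup{SO}(3,\CC)$ preserves calibration and, by the first equation of (\ref{eqn:group}) with $\wedge^{2}g=g$ for $g\in\textup{SO}(3,\CC)$, induces the coordinatewise action $T\mapsto(g,g',g'')\cdot T$. Thus $\mathcal{T}_{\textup{cal}}$ is a closed $\textup{SO}(3,\CC)^{\times 3}$-stable subvariety of $\PP(\CC^{3\times 3\times 3})$. Writing $P\cong\PP^{2}$ for the displayed plane, it therefore suffices to prove (i) $P\subseteq\mathcal{T}_{\textup{cal}}$, and (ii) $\dim\overline{\textup{SO}(3,\CC)^{\times 3}\cdot P}=11$: then $\overline{\textup{SO}(3,\CC)^{\times 3}\cdot P}$ is an $11$-dimensional irreducible closed subvariety of the $11$-dimensional irreducible variety $\mathcal{T}_{\textup{cal}}$, so the two coincide. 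For (i), I would exhibit an explicit family of calibrated camera triples $\bigl([I_{3\times 3}\,|\,0],[R_{2}(\lambda)\,|\,t_{2}(\lambda)],[R_{3}(\lambda)\,|\,t_{3}(\lambda)]\bigr)$, parametrized by $\lambda=[\lambda_{1}:\lambda_{2}:\lambda_{3}]\in\PP^{2}$, whose trifocal tensor computed from Definition \ref{defn:main} equals the displayed tensor up to scale; since $\mathcal{T}_{\textup{cal}}$ is closed it then contains all of $P$. Such a family can be found by taking a generic calibrated trifocal tensor, which is $T_{[I|0],[R_{2}|t_{2}],[R_{3}|t_{3}]}$ with slice $i$ equal to $(\text{column }i\text{ of }R_{2})\,t_{3}^{T}-t_{2}\,(\text{column }i\text{ of }R_{3})^{T}$, and normalizing it under the $\textup{SO}(3,\CC)^{\times 3}$-action together with the residual $\mathcal{G}$-freedom; alternatively, $P\subseteq\mathcal{T}_{\textup{cal}}$ can be verified by a numerical membership test against a witness set. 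For (ii), I would compute the rank of the Jacobian of the map $\textup{SO}(3,\CC)^{\times 3}\times P\to\PP(\CC^{3\times 3\times 3})$ at one generic point; the domain is irreducible of dimension $9+2=11$, so a rank-$11$ Jacobian certifies $\dim\overline{\textup{SO}(3,\CC)^{\times 3}\cdot P}=11$. This is a single linear-algebra check.

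Finally, for the degree, I would use the generically injective restricted parametrization from the dimension step to compute a witness set for $\mathcal{T}_{\textup{cal}}$: intersect with a generic linear subspace of codimension $11$ in $\PP^{26}$ and pull the $11$ defining linear forms back through the parametrization. Appending the defining equations of $\textup{SO}(3,\CC)$ for the two rotation blocks (or first eliminating them via a rational parametrization of $\textup{SO}(3,\CC)$, e.g. by quaternions, to obtain a square system in $11$ unknowns), one counts isolated solutions by homotopy continuation in \texttt{Bertini}; the count is $4912$. The main obstacle, beyond the standard ``correct with probability one'' caveat of numerical algebraic geometry, is justifying that this count really equals $\deg\mathcal{T}_{\textup{cal}}$: one must use Theorem \ref{thm:calibratedConfig} together with the description of the base locus to know the parametrization is birational onto $\mathcal{T}_{\textup{cal}}$ away from a proper closed set, check that a generic codimension-$11$ slice meets $\mathcal{T}_{\textup{cal}}$ transversally in $\deg\mathcal{T}_{\textup{cal}}$ reduced points all coming from the good locus, and confirm the solver finds all of them. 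By contrast, the two checks (i) and (ii) for the orbit-closure description are finite and routine.
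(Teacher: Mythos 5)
Your proposal is correct, and for three of the four assertions (irreducibility, dimension $11$, and the degree computed by pulling a generic codimension-$11$ linear slice back through the quaternion-based, generically injective parametrization and solving the resulting square system in \texttt{Bertini}, with the usual probability-one caveats) it follows the paper's argument essentially verbatim; the paper additionally certifies $4912$ with a trace test, which plays the role of your "confirm the solver finds all of them" step. Where you diverge is the orbit-closure description: the paper argues directly that any calibrated configuration with linearly independent centers can be moved by $\mathcal{G}$ (which leaves $T_{A,B,C}$ unchanged) and then by $\textup{SO}(3,\CC)^{\times 3}$ so that its trifocal tensor lands in the displayed plane $P$, which immediately gives $\mathcal{T}_{\textup{cal}}\subseteq\overline{\textup{SO}(3,\CC)^{\times 3}\cdot P}$, the reverse inclusion being automatic from equivariance; you instead prove $\textup{SO}(3,\CC)^{\times 3}$-stability of $\mathcal{T}_{\textup{cal}}$, membership $P\subseteq\mathcal{T}_{\textup{cal}}$, and $\dim\overline{\textup{SO}(3,\CC)^{\times 3}\cdot P}=11$ via a Jacobian rank check, then conclude by comparing irreducible varieties of equal dimension. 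Both routes are valid; the paper's normalization is cleaner in that it requires no rank computation and produces the plane and its membership in $\mathcal{T}_{\textup{cal}}$ simultaneously (your suggested construction of the explicit camera family is exactly this normalization run in reverse), while your version trades that for two finite checks. One caution about your fallback for check (i): a numerical membership test certifies only finitely many points of $P$, whereas your dimension count needs a Zariski-dense subset of the whole plane inside $\mathcal{T}_{\textup{cal}}$ (a single point would bound the orbit closure's dimension only by $9$), so the explicit normalized camera family, not the membership test, is the variant to use.
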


\begin{proof}
[Computational Proof]
Dimension 11 follows from the generically injective parametrization given above.  The $\textup{SO}(3, \CC)^{\times 3}$ statement
follows from (\ref{eqn:group}).  In more detail,
given a calibrated camera configuration $(A,B,C)$ with linearly independent centers, we may act by $\mathcal{G}$ so that
the centers of $A, B, C$ are:
{\small{
$$\begin{bmatrix} 0 & 0 & 0 & 1\end{bmatrix}^{T}\!\!\!, \,\,\,\,\, \begin{bmatrix} 0 & 0 & 1 & 1\end{bmatrix}^{T}\!\!\!, \,\,\,\,\,
\begin{bmatrix} 0 & \ast & \ast & 1\end{bmatrix}^{T}\!\!\!,$$}}
\!\!respectively.  Then we may act by $\textup{SO}(3, \CC)^{\times 3}$ so that the left submatrices of $A,B,C$ equal $I_{3\times 3}$.
The calibrated trifocal tensor $T_{A,B,C}$ now lands in the stated $\PP^{2}$.   Hence, $\mathcal{T}_{\textup{cal}}$ is that orbit closure due to transformation laws (\ref{eqn:group}).

To compute the degree of $\mathcal{T}_{\textup{cal}}$, we use the open-source homotopy continuation software 
\texttt{Bertini}.  We fix a random linear subspace $\textbf{L} \subset \PP(\CC^{3 \times 3 \times 3})$ of complementary dimension to $\mathcal{T}_{\textup{cal}}$,
i.e. $\textup{dim}(\textbf{L}) = 15$. This is expressed in floating-point as the vanishing of $11$ random linear
forms \begin{inlineequation}[eqn:linearforms]{\ell_{m}(T_{ijk}) = 0}\end{inlineequation}, where $m = 1, \ldots, 11$.  Our goal is to compute $\#(\mathcal{T}_{\textup{cal}} \cap \textbf{L})$.  As homotopy continuation calculations are sensitive to the formulation used, we carefully
explain our own formulation to calculate $\mathcal{T}_{\textup{cal}} \cap \textbf{L}$.  Our formulation starts with the parametrization of 
$\mathcal{T}_{\textup{cal}}$ above, and with its two copies of $\textup{SO}(3,\CC)$.

Recall that unit norm quaternions double-cover $\textup{SO}(3, \RR)$.  Complexifying:

\vspace{-0.4em}

$$
R_{2} = \begin{pmatrix}
a^{2} + b^{2} - c^{2} - d^{2} & 2(bc - ad) & 2(bd + ac) \\
2(bc+ad) & a^{2} + c^{2} - b^{2} - d^{2} & 2(cd - ab) \\
2(bd - ac) & 2(cd + ab) & a^{2} + d^{2} - b^{2} - c^{2}
\end{pmatrix}
$$

\vspace{0.4em}

\noindent where $a,b,c,d \in \CC$ and \begin{inlineequation}[eqn:firstSO]{a^{2} + b^{2} + c^{2} + d^{2} = 1}\end{inlineequation}. Similarly for $R_{3}$ with $e,f,g,h \in \CC$ subject
to \begin{inlineequation}[eqn:secondSO]{e^{2} + f^{2} + g^{2} + h^{2} = 1}\end{inlineequation}.
For our purposes, it is computationally advantageous to replace (\ref{eqn:firstSO}) by a random patch 
\begin{inlineequation}[eqn:firstPatch]{\alpha_{1}a + \alpha_{2}b + \alpha_{3}c + \alpha_{4}d = 1}\end{inlineequation}, where 
$\alpha_{i} \in \CC$ are random floating-point numbers fixed once and for all.  
Similarly, we replace (\ref{eqn:secondSO}) by a random patch 
\begin{inlineequation}[eqn:secondPatch]{\beta_{1}e + \beta_{2}f + \beta_{3}g + \beta_{4}h = 1}\end{inlineequation}.
The patches (\ref{eqn:firstPatch}) and (\ref{eqn:secondPatch}) leave us with injective parameterizations of two subvarieties of $\CC^{3 \times 3}$, that we denote by $\textup{SO}(3,\CC)^{\boldsymbol{\alpha}}, \textup{SO}(3,\CC)^{\boldsymbol{\beta}}$.  These two varieties
have the same closed \textit{affine cone} as the closed affine cone of $\textup{SO}(3, \CC)$.  This affine cone is:

\vspace{-0.85em}

$$\widehat{\textup{SO}(3, \CC)} := \{ R \in \CC^{3 \times 3} : \exists \, \lambda \in \CC \textup{ s.t. } RR^{T} = R^{T}R = \lambda I_{3 \times 3}\}$$

\vspace{0.05em}

\noindent and it is parametrized by $a,b,c,d$ as above, but with no restriction on $a,b,c,d$.
In the definition of the cone $\widehat{\textup{SO}(3, \CC)}$, note $\lambda = 0$ is possible; it corresponds to $a^2+b^2 + c^2 + d^2 = 0$, or to $e^2 + f^2 +g^2 +h^2 = 0$.  By the first remark after Definition \ref{defn:main}, we are free to scale cameras $B$ and $C$ so that their left $3 \times 3$ submatrices satisfy $R_{2} \in \textup{SO}(3,\CC)^{\boldsymbol{\alpha}}$ and $R_{3} \in \textup{SO}(3,\CC)^{\boldsymbol{\beta}}$, and for our formulation here we do so.  Finally, for $\CC^5$ in the source of the parametrization of $\mathcal{T}_{\textup{cal}}$, write $t_{2} = \begin{bmatrix} t_{2,1} & t_{2,2} & 1 \end{bmatrix}^{T}$ and
$t_{3} = \begin{bmatrix} t_{3,1} & t_{3,2} & t_{3,3} \end{bmatrix}^{T}$.  

At this point, we have replaced the dominant, generically injective map $\textup{SO}(3, \CC)^{\times 2} \times \CC^{5} \dashrightarrow \mathcal{T}_{\textup{cal}}$ by the dominant, generically injective parametrization \mbox{$\textup{SO}(3, \CC)^{\boldsymbol{\alpha}} \times \textup{SO}(3, \CC)^{\boldsymbol{\beta}} \times \CC^{5} \dashrightarrow \mathcal{T}_{\textup{cal}}$}.  Also, we have injective, dominant maps $V(\alpha_{1}a + \alpha_{2}b + \alpha_{3}c + \alpha_{4}d - 1) \rightarrow \textup{SO}(3, \CC)^{\boldsymbol{\alpha}}$ and $V(\beta_{1}e + \beta_{2}f + \beta_{3}g + \beta_{4}h - 1) \rightarrow \textup{SO}(3, \CC)^{\boldsymbol{\beta}}$.
Composing gives the generically 1-to-1, dominant
$V(\alpha_{1}a + \alpha_{2}b + \alpha_{3}c + \alpha_{4}d - 1) \times V(\beta_{1}e + \beta_{2}f + \beta_{3}g + \beta_{4}h - 1) \times \CC^5 \dashrightarrow  \mathcal{T}_{\textup{cal}}$.
With exactly this parametrization of $\mathcal{T}_{\textup{cal}}$, it will be most convenient to perform numerical algebraic geometry calculations.
Hence, here to compute $\textup{deg}(\mathcal{T}_{\textup{cal}}) = \#(\mathcal{T}_{\textup{cal}} \cap \textbf{L})$, we consider the square polynomial system:

\begin{itemize}
\item \textit{in 13 variables}: $a,b,c,d,e,f,g,h,t_{2,1}, t_{2,2}, t_{3,1}, t_{3,2}, t_{3,3} \in \CC$;
\item \textit{with 13 equations}: the 11 cubics (\ref{eqn:linearforms}) and 2 linear equations (\ref{eqn:firstPatch}), (\ref{eqn:secondPatch}).
\end{itemize}

\noindent The solution set equals the preimage of $\mathcal{T}_{\textup{cal}} \cap \textbf{L}$.
This system is expected to have $\textup{deg}(\mathcal{T}_{\textup{cal}})$ many solutions. 
We can solve zero-dimensional square systems of this size (in floating-point) using the 
\texttt{UseRegeneration:1} setting in \texttt{Bertini}.  That employs the \textit{regeneration} solving technique
from \cite{HSW}.  For the present system, overall, \texttt{Bertini} tracks 74,667 paths in 1.5 hours on a standard laptop 
computer to find 4912 solutions. Numerical path-tracking in \texttt{Bertini} is based on a \textit{predictor-corrector} approach.  Prediction by default
is done by the Runge-Kutta $4^{\textup{th}}$ order method; correction is by Newton steps. For more information, see \cite[Section 2.2]{BHSWBook}.
Here, this provides strong numerical evidence for
the conclusion that $\textup{deg}(\mathcal{T}_{\textup{cal}}) = 4912$.
Up to the numerical accuracy of \texttt{Bertini} and the reliability of our random number
generator used to choose $\textbf{L}$, this computation is correct with probability 1.
Practically speaking, 4912 is correct only with very high probability.

As a check for 4912, we apply the \textit{trace test} from \cite{SVW}, \cite{HR} and \cite{LS}.
A random linear form $\ell'$ on $\PP(\CC^{3 \times 3 \times 3})$ is fixed.
For $s \in \CC$, we set $L_{s} := V(\ell_{1}+s\ell', \ldots, \ell_{11} + s\ell')$,
so $L_{0} = \textbf{L}$.  Varying $s \in \CC$, the intersection $\mathcal{T}_{\textup{cal}} \cap L_{s}$
consists of $\textup{deg}(\mathcal{T}_{\textup{cal}})$ many complex \textit{paths}.
Let $T_{s} \subset \mathcal{T}_{\textup{cal}} \cap L_{s} $ be a subset of paths.
Then the trace test implies (for generic $\ell', \ell_{i}$) 
that $T_{s} = \mathcal{T}_{\textup{cal}} \cap L_{s}$ if and only if
the \textit{centroid} of $T_{s}$ computed in a consistent affine chart $\CC^{26}$, i.e.

\vspace{-0.5em}

$$
\textup{cen}(T_{s})  := {\small{\frac{1}{\# T_{s}}}}  \sum_{p_s \in T_{s}} p_s ,
$$
is an affine linear
function of $s$.  Here, we set $T_{0}$ to be the 4912 intersection points
found above.  Then we calculate $T_{1}$ with the \texttt{UserHomotopy:1} setting in \texttt{Bertini}, where the variables are $a, \ldots t_{3,3}$, and the start points are the preimages of $T_{0}$.  After this homotopy in parameter space,  $T_{1}$ is obtained by evaluating the endpoints of the track via \texttt{TrackType:-4}.
Similarly, $T_{-1}$ is computed.  Then we calculate that the following quantity
in $\CC^{26}$:

\vspace{-0.7em}

$${\big{(}}\textup{cen}(T_{1}) - \textup{cen}(T_{0}){\big{)}} - {\big{(}}\textup{cen}(T_{0}) - \textup{cen}(T_{-1}){\big{)}}$$

\vspace{0.3em}

\noindent is indeed numerically 0.  This trace test is a further verification of $4912$.
\end{proof}

\begin{rem}
In the proof of Theorem \ref{thm:4912}, when we select one point per fiber per member of 
$\mathcal{T}_{\textup{cal}} \cap \textbf{L}$, we obtain a \textit{pseudo-witness set} $\mathcal{W}$ for $\mathcal{T_{\textup{cal}}}$.  This is the fundamental data structure in numerical algebraic geometry
for computing with parameterized varieties (see \cite{HS}).
Precisely, here it is the quadruple:
\begin{itemize}
\setlength\itemsep{0.3em}
\item the \textit{parameter space} $\mathcal{P} \subset \CC^{13}$, where $\CC^{13}$ has coordinates $a, \ldots, t_{3,3}$
and $\mathcal{P} = V(\alpha_{1}a + \alpha_{2}b + \alpha_{3}c + \alpha_{4}d - 1, 
\beta_{1}e + \beta_{2}f + \beta_{3}g + \beta_{4}h - 1)$
\item the \textit{dominant map} $\Phi: \mathcal{P} \dashrightarrow \mathcal{T}_{\textup{cal}}$ in the proof of Theorem \ref{thm:4912}, e.g.
$\Phi_{1,1,1} = -2 b c t_{2,1}-2 a d t_{2,1}+a^{2} t_{2,2}+b^{2} t_{2,2}-c^{2} t_{2,2}-d^{2} t_{2,2}$
\item the \textit{generic complimentary linear space} $\textbf{L} = V(\ell_1, \ldots, \ell_{11}) \subset \PP(\CC^{3 \times 3 \times 3})$
\item the \textit{finite set} $\mathcal{W} \subseteq \mathcal{P} \subset \CC^{13}$, mapping bijectively to $\mathcal{T}_{\textup{cal}} \cap \textbf{L}$.
\end{itemize}
We heavily use this representation of $\mathcal{T}_{\textup{cal}}$ for the computations in Section \ref{sec:6}.
\end{rem}

Now, we re-visit Proposition \ref{prop:lineartrifocal}.  When $T_{A,B,C}$ is unknown but the point/line correspondence is known, the constraints there
amount to \ul{special linear slices} of $\mathcal{T}$ and of the subvariety $\mathcal{T}_{\textup{cal}}$.  
The next theorem may help the reader appreciate the specialness of these linear sections of $\mathcal{T}_{\textup{cal}}$;
in general, the intersections are not irreducible, equidimensional, nor dimensionally transverse.

\begin{thm} \label{thm:codim}
Fix generic points $x, x', x'' \in \PP^{2}$ and generic lines $\ell, \ell', \ell'' \in (\PP^{2})^{\vee}$.
In the cases of Proposition \textup{\ref{prop:lineartrifocal}}, we have the following codimensions\textup{:}

\vspace{0.3em}

\begin{itemize}
\setlength\itemsep{0.8em}
\item $L = \{T \in \PP(\CC^{3 \times 3 \times3}) : \, T(x, \ell', \ell'') = 0 \}$ is a hyperplane and $\mathcal{T}_{\textup{cal}} \cap L$ consists of one irreducible component of codimension \textup{1} in $\mathcal{T}_{\textup{cal}}$ \hfill \textup{\textbf{[PLL]}}
\item $L = \{T \in \PP(\CC^{3 \times 3 \times 3}) : \,  [\ell]_{\times}T( \--, \ell', \ell'') = 0 \}$ is a codimension \textup{2} subspace and $ \mathcal{T}_{\textup{cal}} \cap L$ consists of two irreducible components both of codimension \textup{2} in $\mathcal{T}_{\textup{cal}}$ \hfill \textup{\textbf{[LLL]}}
\item $L = \{T \in \PP(\CC^{3 \times 3 \times 3}): \,  [x'']_{\times}T( x, \ell', \--) = 0 \}$ is a codimension \textup{2} subspace and $\mathcal{T}_{\textup{cal}} \cap L$ consists of two irreducible components both of codimension \textup{2} in $\mathcal{T}_{\textup{cal}}$ \hfill\textup{\textbf{[PLP]}}
\item$L = \{T \in \PP(\CC^{3 \times 3 \times 3}) : \,  [x']_{\times}T( x, \--, \ell'') = 0 \}$ is a codimension \textup{2} subspace and $\mathcal{T}_{\textup{cal}} \cap L$ consists of two irreducible components both of codimension \textup{2} in $\mathcal{T}_{\textup{cal}}$ \hfill\textup{\textbf{[PPL]}}
\item $L = \{T \in \PP(\CC^{3 \times 3 \times 3}) : \,   [x'']_{\times}T(x, \--, \--)[x']_{\times}=0 \}$ is a codimension \textup{4} subspace and $\mathcal{T}_{\textup{cal}} \cap L$ consists of five irreducible components, one of codimension \textup{3} and four of codimension \textup{4} in $\mathcal{T}_{\textup{cal}}$. \hfill \textup{\textbf{[PPP]}}
\end{itemize}

\end{thm}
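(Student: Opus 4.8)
The plan is computational, following the methodology already used for Theorem \ref{thm:4912}, and it naturally splits into two parts: first establishing the codimension of $L$ as a linear subspace of $\PP(\CC^{3\times3\times3})$, and then determining the irreducible decomposition of $\mathcal{T}_{\textup{cal}} \cap L$ via the parametrization of $\mathcal{T}_{\textup{cal}}$.

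For the codimension of $L$, the point is that in every case the defining conditions are linear in the entries $T_{ijk}$, so $L$ is the projectivization of a linear subspace and I only need the rank of the relevant coefficient matrix for generic choices of the points and lines. In the \textup{\textbf{[PLL]}} case this coefficient is the vector $x \otimes \ell' \otimes \ell''$, which is nonzero, so $L$ is a hyperplane. In the \textup{\textbf{[LLL]}}, \textup{\textbf{[PLP]}}, \textup{\textbf{[PPL]}} cases the condition has the shape $[v]_{\times}\, w(T)$, where $T \mapsto w(T) \in \CC^{3}$ is linear and surjective (immediate from the contraction formulas, since $x$, or $\ell'\otimes\ell''$, etc., is nonzero) and $[v]_{\times}$ has rank $2$; hence the image of $T \mapsto [v]_{\times}w(T)$ is the $2$-dimensional space $v^{\perp}$, and $L$ has codimension $2$. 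In the \textup{\textbf{[PPP]}} case the condition is $[x'']_{\times}\,M(T)\,[x']_{\times}$ with $T \mapsto M(T) \in \CC^{3\times 3}$ linear and surjective; since $[x'']_{\times}$ and $[x']_{\times}$ each have rank $2$, the image of $M \mapsto [x'']_{\times}M[x']_{\times}$ is exactly the $4$-dimensional space of $3\times 3$ matrices with column space in $(x'')^{\perp}$ and row space in $(x')^{\perp}$, so $L$ has codimension $4$. All rank statements hold on a Zariski-open dense set of the points and lines.

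For the decomposition of $\mathcal{T}_{\textup{cal}} \cap L$, I would reuse the pseudo-witness set of $\mathcal{T}_{\textup{cal}}$ constructed in the proof of Theorem \ref{thm:4912}: the dominant, generically one-to-one parametrization $\Phi : \mathcal{P} \dashrightarrow \mathcal{T}_{\textup{cal}}$ with $\mathcal{P} \subset \CC^{13}$. Fixing generic (random floating-point) values of $x, x', x'', \ell, \ell', \ell''$, I pull the linear equations defining $L$ back through $\Phi$, obtaining an explicit polynomial system on $\mathcal{P}$ whose zero set is $\Phi^{-1}(\mathcal{T}_{\textup{cal}} \cap L)$. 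Running \texttt{Bertini}'s numerical irreducible decomposition (the dimension-by-dimension cascade of \cite{SW}) on this system, I read off the dimensions of the components, discard any component contained in the indeterminacy or base locus of $\Phi$ (triples of cameras with a common center), and certify irreducibility of the survivors by monodromy loops together with the trace test, just as in Theorem \ref{thm:4912}. Because $\Phi$ is generically one-to-one, each surviving source component corresponds to an irreducible component of $\mathcal{T}_{\textup{cal}} \cap L$ of the same dimension, and matching these against the claimed numbers (one of codimension $1$; two each of codimension $2$; and one of codimension $3$ together with four of codimension $4$) finishes the proof. In the smaller cases one can cross-check symbolically in \texttt{Macaulay2}: pull $L$ back to $\mathcal{P}$ over a generic rational point of $(\PP^{2})^{3}\times((\PP^{2})^{\vee})^{3}$, saturate away the base locus, and compute a primary decomposition.

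The main obstacle is the bookkeeping forced by $\Phi$ being merely rational: $\Phi^{-1}(\mathcal{T}_{\textup{cal}} \cap L)$ can pick up spurious components inside the base or indeterminacy locus, and components of the true intersection that meet the locus where $\Phi$ fails to be injective must still be tracked with the correct dimension. This is delicate precisely in the \textup{\textbf{[PPP]}} case, where the intersection is genuinely \emph{not} of pure dimension --- there is excess intersection along the codimension-$3$ component, in the same spirit as the non-transverse behavior recorded in Lemma \ref{lem:topdim} --- so the decomposition routine must reliably separate the dimension-$8$ piece from the four dimension-$7$ pieces. Making sure that no low-dimensional component is overlooked, and that every reported component is truly irreducible and not a union, is where the monodromy and trace-test verifications are essential.
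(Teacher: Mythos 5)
Your overall strategy coincides with the paper's: establish the codimension of $L$ by a rank computation on the linear conditions, then compute a numerical irreducible decomposition of the preimage $\Phi^{-1}(\mathcal{T}_{\textup{cal}}\cap L)$ in the parameters $a,\ldots,t_{3,3}$ with \texttt{Bertini} (using the pseudo-witness parametrization from Theorem \ref{thm:4912}) and push the result forward. Your linear-algebra argument for the codimensions of $L$ --- composing a surjective contraction with $[v]_{\times}$ of rank $2$, and noting that $M\mapsto[x'']_{\times}M[x']_{\times}$ has rank $4$ --- is correct and arguably cleaner than the paper's symbolic check that all $3\times 3$ minors of the $3\times 27$ coefficient matrix vanish while some $2\times 2$ minor does not.

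The gap is in the push-forward step. From ``$\Phi$ is generically one-to-one'' you conclude that each surviving source component corresponds to an irreducible component of $\mathcal{T}_{\textup{cal}}\cap L$ of the same dimension; that inference does not follow, because generic injectivity on the $13$-dimensional parameter space says nothing about a $7$- or $8$-dimensional component of the pullback, which could lie entirely inside the locus where $\Phi$ has multiple-point or positive-dimensional fibers --- e.g.\ the non-physical loci $a^{2}+b^{2}+c^{2}+d^{2}=0$ or $e^{2}+f^{2}+g^{2}+h^{2}=0$ of Definition/Proposition \ref{defnprop:specialLoci} (which are not the base locus, so your ``discard components in the base locus'' step does not catch them), or the locus of camera triples without linearly independent centers, where Theorem \ref{thm:calibratedConfig} gives no control. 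You name this as the main obstacle but never say how to overcome it. The paper's proof supplies exactly the missing step: for each component of the numerical irreducible decomposition, take a general point from its witness set, check numerically that $a^{2}+b^{2}+c^{2}+d^{2}\not\approx 0$ and $e^{2}+f^{2}+g^{2}+h^{2}\not\approx 0$, and check via singular value decomposition that the corresponding camera triple has linearly independent centers; Theorem \ref{thm:calibratedConfig} then yields generic injectivity of $\Phi$ on the union of these components, so in the PPP case the five source components of dimensions $8,7,7,7,7$ map to five distinct components of $\mathcal{T}_{\textup{cal}}\cap L$ of the same dimensions (and similarly in the other cases). Adding this per-component verification closes your argument; without it the asserted number and codimensions of the components are not justified.
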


\begin{proof}
[Computational Proof]  The statements about the subspaces may shown symbolically. In the case of `\textit{LLL}', e.g., work in the ring $\QQ[\ell_{0}, \ldots, \ell''_{2}]$ with 8 variables, and write the constraint on $T \in \PP(\CC^{3 \times 3 \times 3})$ as the vanishing of 
a $ 3 \times 27$ matrix times a vectorization of $T$.  Now we check that all of the $3 \times 3$ minors of that long matrix are identically 0, but not so for $2 \times 2$ minors.

For the statements about $\mathcal{T}_{\textup{cal}} \cap L$, we offer a probability 1, numerical argument.  By \cite[Theorem A.14.10]{SW} and the discussion on page 348 about generic irreducible decompositions,
we can fix random floating-point coordinates for $x, x', x'', \ell, \ell', \ell''$.  With the parametrization $\Phi$ of $\mathcal{T}_{\textup{cal}}$ from the proof of Theorem \ref{thm:4912}, the \texttt{TrackType:1} setting 
in \texttt{Bertini} is used to compute a \textit{numerical irreducible decomposition} for the \textit{preimage} of $\mathcal{T}_{\textup{cal}} \cap L$ per each case.  That outputs a 
witness set, i.e. general linear section, per irreducible component.  \texttt{Bertini}'s \texttt{TrackType:1} is based on regeneration, monodromy and the trace test; see \cite[Chapter 15]{SW} or
\cite[Chapter 8]{BHSWBook} for a description.  

Here, the `\textit{PPP}' case is most subtle since the subspace $L \subseteq \PP(\CC^{3 \times 3 \times 3})$ 
is codimension 4, but the linear section $\mathcal{T}_{\textup{cal}} \cap L \subseteq \mathcal{T}_{\textup{cal}}$ includes a codimension 3 component.
The numerical irreducible decomposition above consists of five components of dimensions $8, 7, 7, 7, 7$ in $a, \ldots, t_{3,3}$-parameter space.
Thus, it suffices to verify that the map to $\mathcal{T}_{\textup{cal}}$ is generically injective restricted to the union of these components.
For that, we take one general point on each component from the witness sets, and test whether that point satisfies $a^2 + b^2 + c^2 + d^2 \not\approx 0$ and $e^2 + f^2 + g^2 + h^2 \not\approx 0$.  This indeed holds for all components.  Then, we test using singular value decomposition (see \cite[Theorem 3.2]{Dem}) whether the point maps to a camera triple with linearly independent centers. 
Linear independence indeed holds for all components.  From Theorem \ref{thm:calibratedConfig}, the above parametrization is generically injective on this locus.  Hence the image $\mathcal{T}_{\textup{cal}} \cap L$ consists of distinct components with the same dimensions $8,7,7,7,7$.
This finishes `\textit{PPP}'.  The other cases are similar.
\end{proof}

Mimicking the proof of Proposition \ref{prop:finite}, and using the `top-dimensional'
clause in Lemma \ref{lem:topdim}, we can establish the following finiteness result for $\mathcal{T}_{\textup{cal}}$:

\begin{lem} \label{lem:trifocalFinite}
For each problem in Theorem \textup{\ref{thm:main}}, given generic image correspondence data, there are only 
finitely many tensors $T \in \mathcal{T}_{\textup{cal}}$ that satisfy all of the linear conditions from Proposition \textup{\ref{prop:lineartrifocal}}.
\end{lem}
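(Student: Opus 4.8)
The plan is to mimic the incidence-variety dimension count in the proof of Proposition~\ref{prop:finite}, now with $\mathcal{T}_{\textup{cal}}$ playing the role of the configuration space and with the linear conditions of Proposition~\ref{prop:lineartrifocal} playing the role of the consistency conditions. Fix a minimal problem with weights $(w_1,w_2,w_3,w_4,w_5)$, so that $3w_1+2w_2+2w_3+2w_4+w_5=11$, and let $\mathcal{D}_w$ denote the parameter space of image correspondence data; each of the $w_1+\dots+w_5$ correspondences lies in a six-dimensional three-factor product of $\PP^2$ and $(\PP^2)^{\vee}$, so $\dim\mathcal{D}_w=6(w_1+\dots+w_5)$ and, exactly as in Proposition~\ref{prop:finite}, $11+(3w_1+4w_2+4w_3+4w_4+5w_5)=6(w_1+\dots+w_5)$. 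I would then form the closed incidence variety
$$\Gamma:=\bigl\{(T,d)\in\mathcal{T}_{\textup{cal}}\times\mathcal{D}_w : T\text{ satisfies every linear condition of Proposition~\ref{prop:lineartrifocal} determined by }d\bigr\},$$
with the two projections $p_1:\Gamma\to\mathcal{T}_{\textup{cal}}$ and $p_2:\Gamma\to\mathcal{D}_w$; the fibre $p_2^{-1}(d)$ is precisely the set we must show is finite for generic $d$.

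First I would compute the generic fibre of $p_1$. By Theorems~\ref{thm:4912} and \ref{thm:calibratedConfig}, together with the generic injectivity of the restricted parametrization of $\mathcal{T}_{\textup{cal}}$ discussed in Section~\ref{sec:5}, a generic $T\in\mathcal{T}_{\textup{cal}}$ equals $T_{A,B,C}$ for a calibrated configuration with linearly independent centers. For such $T$, the fibre $p_1^{-1}(T)$ is the product, over the $w_1+\dots+w_5$ correspondences, of the subschemes of three-factor products of $\PP^2$ and $(\PP^2)^{\vee}$ cut out by the matching trilinearities; by the ``top-dimensional'' clause of Lemma~\ref{lem:topdim}, each such subscheme has the same dimension as the corresponding multi-view variety of Theorem~\ref{thm:multi-view}, namely $3$ for PPP, $4$ for PPL, PLP and LLL, and $5$ for PLL. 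Hence $\dim p_1^{-1}(T)=3w_1+4w_2+4w_3+4w_4+5w_5$ for generic $T$, so every component of $\Gamma$ that dominates $\mathcal{T}_{\textup{cal}}$ has dimension at most $11+3w_1+4w_2+4w_3+4w_4+5w_5=\dim\mathcal{D}_w$ by \cite[Corollary~13.5]{Eis}. Restricted to the union of these components, $p_2$ is then a regular map between varieties of equal dimension, so over a generic $d$ its fibre is finite (possibly empty, with positivity deferred exactly as in Proposition~\ref{prop:finite}).

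It then remains to bound the components of $\Gamma$ that do not dominate $\mathcal{T}_{\textup{cal}}$; each maps under $p_1$ into a proper closed subvariety of $\mathcal{T}_{\textup{cal}}$, hence of dimension at most $10$. Mimicking the second half of the proof of Proposition~\ref{prop:finite}, the relevant strata are the image of configurations with distinct but collinear centers and the strictly lower-dimensional boundary of $\mathcal{T}_{\textup{cal}}$ not arising from any configuration, since configurations with coincident centers map to $0\notin\mathcal{T}_{\textup{cal}}$. For a configuration with distinct collinear centers, the multi-view varieties have the same dimensions as in Theorem~\ref{thm:multi-view} (this is checked in the proof of Proposition~\ref{prop:finite}), and re-running the \texttt{Macaulay2} saturation and primary decomposition of Lemma~\ref{lem:topdim} for such a configuration shows that the trilinear subschemes have dimension bounded by the corresponding multi-view dimensions. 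Thus the part of $\Gamma$ lying over any such proper stratum has dimension at most $10+3w_1+4w_2+4w_3+4w_4+5w_5<\dim\mathcal{D}_w$, so its image under $p_2$ is not dense and a generic $d$ avoids it. Combining the two cases shows $p_2^{-1}(d)$ is finite for generic $d$.

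The hard part is this last step. Lemma~\ref{lem:topdim} is available only for configurations with linearly independent centers, so to make the argument complete one must verify that the subschemes cut out by the trilinearities of Proposition~\ref{prop:lineartrifocal} do not jump too much in dimension over the degenerate (collinear- or coincident-center) locus of $\mathcal{T}_{\textup{cal}}$ and over its boundary---precisely, that any increase is dominated by the drop in the dimension of the containing stratum, so that the non-dominating part of $\Gamma$ still fails to surject onto $\mathcal{D}_w$. This amounts to a finite list of additional \texttt{Macaulay2} checks, structured exactly as the collinear-center and coincident-center counts in the proof of Proposition~\ref{prop:finite}.
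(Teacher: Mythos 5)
Your proposal matches the paper's own treatment: the paper establishes this lemma only by the remark that one mimics the incidence-variety dimension count from the proof of Proposition \ref{prop:finite}, using the ``top-dimensional'' clause of Lemma \ref{lem:topdim} to control the fibres of the projection to $\mathcal{T}_{\textup{cal}}$ --- which is exactly your first two paragraphs. The degenerate-locus verification you single out as the ``hard part'' (fibre dimensions over the collinear-center stratum and the boundary of $\mathcal{T}_{\textup{cal}}$) is not carried out in the paper either, so your write-up is, if anything, more explicit about where the analogous extra checks from Proposition \ref{prop:finite} would be needed.
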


%
%
%
%

We have arrived at a relaxation for each minimal problem in
Theorem \ref{thm:main}, as promised. Namely, for a problem there we can fix a random instance
of image data, and
we seek those calibrated trifocal tensors that satisfy the -- merely necessary -- linear conditions in \ref{prop:lineartrifocal}.
Geometrically, this is equivalent intersect the special linear sections of $\mathcal{T}_{\textup{cal}}$ from Theorem \ref{thm:codim}.
In Section \ref{sec:6},
we will use the pseudo-witness set representation $(\mathcal{P}, \Phi, \textbf{L}, \mathcal{W})$ of $\mathcal{T}_{\textup{cal}}$ from Theorem \ref{thm:4912}
to compute these special slices of $\mathcal{T}_{\textup{cal}}$ in $\texttt{Bertini}$.  Conveniently, \texttt{Bertini} outputs a calibrated camera triple per calibrated trifocal tensor in the intersection; this is because all solving is done in the parameter space $\mathcal{P}$, or in other words, camera space. To solve the original minimal problem, we then
test these configurations against the tight conditions of
Theorem \ref{thm:multi-view}.

\section{Proof of Main Result} \label{sec:6}

In this section, we put all the pieces together and we determine the algebraic degrees of the minimal problems 
in Theorem \ref{thm:main}.  Mathematically, these degrees represent interesting \underline{enumerative geometry} problems; in vision, related work for three
\textit{uncalibrated} views appeared in \cite{OZA}.  The authors
considered correspondences `\textit{PPP}' and `\textit{LLL}' and they determined 3 degrees for projective (uncalibrated) views, using the larger group actions
present in that case.
Here, all 66 degrees for calibrated views in Theorem \ref{thm:main} are new.

Now, recall from Proposition \ref{prop:finite} that solutions $(A,B,C)$
to the problems in Theorem \ref{thm:main} in particular must have non-identical centers.  So, by the second remark after Definition \ref{defn:main}, they associate to nonzero tensors $T_{A,B,C}$, and thus to 
well-defined points in the projective variety $\mathcal{T}_{\textup{cal}}$.  Conversely, however, there are special subloci of $\mathcal{T}_{\textup{cal}}$ that are not physical.
Points in these subvarieties (introduced next) are extraneous to Theorem \ref{thm:main}, 
because they correspond to configurations with a $3 \times 4$ matrix whose left $3 \times 3$ submatrix $R$ is not a rotation, but instead
satisfies $RR^{T} = R^{T}R = 0$.

\begin{defnprop} \label{defnprop:specialLoci}
Recall the parametrization of $\mathcal{T}_{\textup{cal}}$ by $a, \ldots, t_{3,3}$ from 
Theorem \textup{\ref{thm:4912}}.  Let $\mathcal{T}_{\textup{cal}}^{0,1} \subset \mathcal{T}_{\textup{cal}}$
be the closure of the image of the rational map restricted to the locus $a^2+b^2+c^2+d^2=0$.
Let $\mathcal{T}_{\textup{cal}}^{1, 0} \subset \mathcal{T}_{\textup{cal}}$
be the closure of the image of the rational map restricted to the locus $e^2+f^2+g^2+h^2=0$.
Let $\mathcal{T}_{\textup{cal}}^{0, 0} \subset \mathcal{T}_{\textup{cal}}$
be the closure of the image of the rational map restricted to the locus $a^2+b^2+c^2+d^2=0$ and $e^2+f^2+g^2+h^2=0$.
Then these subvarieties are irreducible with\textup{:}
$\dim(\mathcal{T}_{\textup{cal}}^{0,0}) = 9$ and $\textup{deg}(\mathcal{T}_{\textup{cal}}^{0,0})=1296$\textup{;}
 $\dim(\mathcal{T}_{\textup{cal}}^{0,1}) = 10$ and $\textup{deg}(\mathcal{T}_{\textup{cal}}^{0,1})=2616$\textup{;}
 $\dim(\mathcal{T}_{\textup{cal}}^{1,0}) = 10$ and $\textup{deg}(\mathcal{T}_{\textup{cal}}^{1,0})=2616$.

\end{defnprop}

\begin{proof}[Computational Proof]
The restricted parameter spaces: 
\begin{align*}
\mathcal{P} \cap V(a^2+b^2+c^2+d^2), \,\,\, \mathcal{P} \cap V(e^2+f^2+g^2+h^2), \\
\mathcal{P} \cap V(a^2+b^2+c^2+d^2, \,e^2+f^2+g^2+h^2) \subset \CC^{13},
\end{align*}

\vspace{-0.1em}

\noindent where 
$\mathcal{P} = V(\alpha_{1}a + \alpha_{2}b + \alpha_{3}c + \alpha_{4}d - 1, \, \beta_{1}e+ \beta_{2}f + \beta_{3}g + \beta_{4}h -1)$,
are irreducible, therefore their images $\mathcal{T}_{\textup{cal}}^{0,1}, \mathcal{T}_{\textup{cal}}^{1,0}, \mathcal{T}_{\textup{cal}}^{0,0} \subset \PP(\CC^{3 \times 3 \times 3})$
are irreducible.
The dimension statements are verified by picking a random point in the restricted parameter spaces,
and then by computing the rank of the derivative of the restricted rational map $\Phi$ at that point.  This 
rank equals the dimension of the image with probability 1, by generic smoothness over $\CC$ \cite[III.10.5]{Hartshorne} and the preceding \cite[III.10.4]{Hartshorne}.  For the degree statements, the approach from Theorem \ref{thm:4912} may be used.
For $\mathcal{T}_{\textup{cal}}^{0,1}$ we fix a random linear subspace $\textbf{M} \subset \PP(\CC^{3 \times 3 \times 3})$ of complementary dimension, i.e $\dim(\textbf{M}) = 16$, so
$\textup{deg}(\mathcal{T}_{\textup{cal}}^{0,1}) = \#(\mathcal{T}_{\textup{cal}}^{0,1} \cap \textbf{M})$.
We pull back to $\mathcal{P} \cap V(a^2+b^2+c^2+d^2) \cap \Phi^{-1}(\textbf{M})$, and use
the \texttt{UseRegeneration:1} setting in \texttt{Bertini} to solve for this.
This run outputs 2616 floating-point tuples in $a, \ldots, t_{3,3}$ coordinates.
Then, we apply the parametrization $\Phi$ and check that the image of these are 2616 numerically 
distinct tensors, i.e. the restriction
$\Phi |_{\mathcal{P} \cap V(a^2+b^2+c^2+d^2)}$ is generically injective.
It follows that $\textup{deg}(\mathcal{T}_{\textup{cal}}^{0,1}) = 2616$, up to numerical accuracy
and random choices.  To verify this degree further, we apply the trace test as in Theorem \ref{thm:4912}, and this finishes the computation for $\textup{deg}(\mathcal{T}_{\textup{cal}}^{0,1})$.
Since $\mathcal{T}_{\textup{cal}}^{0,1}$ and $\mathcal{T}_{\textup{cal}}^{0,1}$ are linearly isomorphic under the permutation $T_{ijk} \mapsto T_{ikj}$, this implies
$\textup{deg}(\mathcal{T}_{\textup{cal}}^{1,0}) = 2616$.  The computation for $\textup{deg}(\mathcal{T}_{\textup{cal}}^{0,0})$ is similar.
\end{proof}

Now, we come to the proof of Theorem \ref{thm:main}, at last.  The outline was given in the last paragraph of Section \ref{sec:5}: for computations,
solving the polynomial systems of multi-view equations (see Theorem \ref{thm:multi-view}) is relaxed to taking a special linear section of the calibrated
trifocal variety $\mathcal{T}_{\textup{cal}}$ (see Theorem \ref{thm:codim}).  Then, to take this slice,
we use the numerical algebraic geometry technique of \textit{coefficient-parameter homotopy} \cite[Theorems 7.1.1, A.13.1]{SW}, i.e. 
a general linear section is moved in a homotopy to the special linear section.  

\begin{proof}[Computational Proof of Theorem \textup{\ref{thm:main}}]
Consider one of the problems `$w_{1} \textit{PPP} + w_{2} \textit{PPL} + w_{3} \textit{PLP} + w_{4} \textit{LLL} + w_{5} \textit{PLL}$'
in Theorem \ref{thm:main}, so that the weights $(w_{1}, w_{2}, w_{3}, w_{4}, w_{5}) \in \ZZ_{\geq 0}^{5}$ satisfy $3w_{1} + 2w_{2} + 2w_{3} + 2w_{4} + w_{5} = 11$ and $w_{2} \geq w_{3}$.  Fix one general instance of this problem, by taking image data with random floating-point coordinates.  Each point/line image correspondence
in this instance defines a special linear subspace of $\PP(\CC^{3 \times 3 \times 3})$, as in Theorem \ref{thm:codim}.  The intersection of these 
is one subspace $L_{\textup{special}}$ expressed in floating-point; 
using singular value decomposition, we verify that its codimension in $\PP(\CC^{3 \times 3 \times 3})$ is the expected 
$4w_{1} + 2w_{2} + 2w_{3} + 2w_{4} + w_{5} = 11 + w_{1}$.
By Proposition \ref{prop:lineartrifocal}, $L_{\textup{special}}$ represents necessary conditions for consistency, so we seek $ \mathcal{T}_{\textup{cal}} \cap L_{\textup{special}}$.  If $w_{1} > 0$, then this intersection is not dimensionally transverse by
the `\textit{PPP}' clause of Theorem \ref{thm:codim}.  To deal with a square polynomial system, we fix a general linear space ${L'}_{\textup{special}} \supseteq {L}_{\textup{special}}$ of codimension 11 in $\PP(\CC^{3 \times 3 \times 3})$ and now seek $\mathcal{T}_{\textup{cal}} \cap {L'}_{\textup{special}}$.  This step is known as \textit{randomization} \cite[Section 13.5]{SW} in numerical algebraic geometry, and it is needed to apply the parameter homotopy result \cite[Theorem 7.1.1]{SW}.

The linear section $\mathcal{T}_{\textup{cal}} \cap {L'}_{\textup{special}}$ is found numerically by a degeneration.
In the proof of Theorem \ref{thm:4912}, we computed a pseudo-witness set for $\mathcal{T}_{\textup{cal}}$.
This includes a general complimentary linear section $\mathcal{T}_{\textup{cal}} \cap \textbf{L}$, and the preimage 
$\Phi^{-1}(\mathcal{T}_{\textup{cal}} \cap \textbf{L})$ of
$\textup{deg}(\mathcal{T}_{\textup{cal}})=4912$ points in $a, \ldots, t_{3,3}$ space.
Writing $\textbf{L} = V(\ell_{1}, \ldots, \ell_{11})$ and $L'_{\textup{special}} = V(\ell'_{1}, \ldots, \ell'_{11})$
for linear forms $\ell_{i}$ and $\ell'_{i}$ on $\PP(\CC^{3 \times 3 \times 3})$, consider the following 
homotopy function $H : \CC^{13} \times \RR \rightarrow \CC^{13}$:

\vspace{-0.7em}

$$
H(a, \ldots, t_{3,3}, s) :=
\begin{bmatrix}
s\cdot \ell_{1}\big{(}\Phi(a, \ldots, t_{3,3})\big{)} + (1-s) \cdot \ell'_{1}\big{(}\Phi(a, \ldots, t_{3,3})\big{)}\\[4pt]
\vdots \\[4pt]
s\cdot \ell_{11}\big{(}\Phi(a, \ldots, t_{3,3})\big{)} + (1-s) \cdot \ell'_{11}\big{(}\Phi(a, \ldots, t_{3,3})\big{)}\\[4pt]
\alpha_{1}a+\alpha_{2}b+\alpha_{3}c+\alpha_{4}d-1\\[4pt]
\beta_{1}e+\beta_{2}f+\beta_{3}g+\beta_{4}h-1
\end{bmatrix}.
$$

\vspace{0.3em}

\noindent Here $s \in \RR$ is the \textit{path variable}.  As $s$ moves from 1 to 0, $H$
defines a family of square polynomial systems in the 13 variables $a, \ldots, t_{3,3}$.
The \textit{start system} $H(a, \ldots, t_{3,3}, 1) = 0$ has solution set $\Phi^{-1}(\mathcal{T}_{\textup{cal}} \cap \textbf{L})$
and the \textit{target system} $H(a, \ldots, t_{3,3}, 0) = 0$ has solution set $\Phi^{-1}(\mathcal{T}_{\textup{cal}} \cap L'_{\textup{special}}).$  With the \texttt{UserHomotopy:1} setting in \texttt{Bertini}, we track the 4912
solution paths from the start to target system.  By genericity of $\textbf{L}$ in the start system, these solution paths are smooth \cite[Theorem 7.1.1(4), Lemma 7.1.2]{SW}.
The finite endpoints of this track consist of solutions to the target system.  By the principle of 
coefficient-parameter homotopy \cite[Theorem A.13.1]{SW},
every isolated point in $\Phi^{-1}(\mathcal{T}_{\textup{cal}} \cap L'_{\textup{special}})$ is an endpoint, with probability 1.
Note that in general, coefficient-parameter homotopy -- i.e., the tracking of solutions of a \textit{general} instance of a parametric system of equations to solutions of a \textit{special} instance -- may be used to find all \textit{isolated} solutions to \textit{square} polynomial systems. 
Here, by Lemma \ref{lem:trifocalFinite}, $\mathcal{T}_{\textup{cal}} \cap L_{\textup{special}}$ is a scheme with finitely many points. By Bertini's theorem \cite[Theorem 13.5.1(1)]{SW}, $\mathcal{T}_{\textup{cal}} \cap L'_{\textup{special}}$ also consists of finitely many points, using genericity of $L'_{\textup{special}}$. On the other hand, by Proposition \ref{prop:finite}, all solutions $(A,B,C)$ to the instance of the original minimal problem indexed by $w \in \ZZ^5_{\geq 0}$
have linearly independent centers in $\PP^{3}$.  Moreover, a configuration $(A,B,C)$ with linearly independent centers is an isolated 
point in $\Phi^{-1}(T_{A,B,C})$, thanks to Theorem \ref{thm:calibratedConfig}.  Therefore, it follows that all solutions to the problem from Theorem \ref{thm:main}
are among the isolated points in $\Phi^{-1}(\mathcal{T}_{\textup{cal}} \cap L'_{\textup{special}})$, and so the endpoints of the above homotopy.  

For each minimal problem in Theorem \ref{thm:main}, after the above homotopy, \texttt{Bertini} returns 4912 finite endpoints in $a, \ldots, t_{3,3}$ space.  We pick out which of these endpoints are solutions to the original minimal problem by performing a sequence of checks, as explained next.  First of all, of these endpoints, let us keep only those that lie in $\Phi^{-1}(\mathcal{T}_{\textup{cal}} \cap L_{\textup{special}})$, as opposed to those that lie
just in the squared-up target solution set $\Phi^{-1}(\mathcal{T}_{\textup{cal}} \cap L'_{\textup{special}})$. Second, we remove points that satisfy $a^2 + b^2 + c^2 + d^2 \approx 0$ or $e^2 + f^2 + g^2 + h^2 \approx 0$, because they are non-physical (see Definition/Proposition \ref{defnprop:specialLoci}).
Third, we verify that, in fact, all remaining points correspond to camera configurations $(A,B,C)$ with linearly independent centers.  
This means that the equations in Theorem \ref{thm:multi-view} generate the multi-view ideals (recall Definition \ref{defn:multi-view}).  Fourth, we check which remaining points satisfy 
those tight multi-view equations. To test this robustly in floating-point, note that the equations in Theorem \ref{thm:multi-view} are equivalent to rank drops of the concatenated matrices there, hence we test for those rank drops using singular value decomposition.  If the ratio of two consecutive singular values exceeds $10^5$, then this is taken as an indication that all singular values below are numerically 0, thus the matrix drops rank.
Fifth, and conversely, we verify that all remaining configurations $(A,B,C)$ avoid epipoles (recall Definition \ref{defn:epi}) for the fixed random instance of image correspondence data, so the converse Lemma \ref{lem:avoid} applies to prove consistency.  Lastly, we verify that all solutions are numerically distinct.  Ultimately, the output of this procedure is a list of all calibrated camera configurations over $\CC$ that are solutions to the fixed random instances of the minimal problems, where these solutions are expressed in floating-point and $a, \ldots, t_{3,3}$ coordinates.  The~numbers~of~solutions~are~the~algebraic~degrees~from~Theorem~\ref{thm:main}.

As a check for this numerical computation, we repeat the entire calculation for other random instances of correspondence data.  For each minimal problem, we obtain the same algebraic degree each time.  One instance per problem solved to high precision is provided on this paper's webpage.
\end{proof}

\begin{exmp}
We illustrate the proof of Theorem \ref{thm:main} by returning to the \mbox{instance}
of `$1 PPP + 4 \textit{PPL}$' in Example \ref{exmp:160}.  Here 
$L_{\textup{special}} \subset \PP(\CC^{3\times 3\times 3})$ formed by intersecting 
subspaces from Theorem \ref{thm:codim} is codimension 12, hence 
$L'_{\textup{special}} \varsupsetneq L_{\textup{special}}$.  Tracking $\textup{deg}(\mathcal{T}_{\textup{cal}})$ many
points in the pseudo-witness set $\Phi^{-1}(\mathcal{T}_{\textup{cal}} \cap \textbf{L})$ to the target $\Phi^{-1}(\mathcal{T}_{\textup{cal}} \cap L'_{\textup{special}})$, we get 4912 finite endpoints.
Testing membership in $L_{\textup{special}}$,
we get 2552 points in $\Phi^{-1}(\mathcal{T}_{\textup{cal}} \cap L_{\textup{special}})$.
Among these, 888 points satisfy $a^2 + b^2 + c^2 + d^2 \approx 0$, so they are non-physical 
(corresponding to $3 \times 4$ matrices 
with left submatrices that are not rotations).
The remaining 1664 points turn out to correspond to calibrated camera configurations 
with linearly independent centers.  Checking satisfaction of the equations from Theorem \ref{thm:multi-view}, we end up with 
$160$ solutions.
\end{exmp}

\begin{rem}
The proof of Theorem \ref{thm:main} is constructive.  From the solved random instances, 
one may build solvers for each minimal problem, using coefficient-parameter homotopy.  Here
the start system is the solved instance of the minimal problem and the target system
is another given instance.  Such a solver is \underline{optimal}
in the sense that the number of paths tracked equals the true algebraic degree of the problem.
Implementation is left to future work.
\end{rem}

\begin{rem}
All degrees in Theorem \ref{thm:main} are divisible by 8.  We would like to understand why.
What are the \textit{Galois groups} \cite{HRS} for these minimal problems?
\end{rem}

\begin{rem}
Practically speaking, given image correspondence data defined over $\RR$, 
\underline{only real solutions} $(A,B,C)$ to the minimal problems in Theorem \ref{thm:main}
are of interest to RANSAC-style $3D$ reconstruction algorithms.  Does there exist image data 
such that all solutions are real?  Also, for the image data observed in practice,
what is the distribution of the number of real solutions?
\end{rem}

\bigskip
\medskip

\bigskip \bigskip \bigskip

\footnotesize 

\noindent \textbf{Author's address:}

\noindent Joe Kileel, University of California, Berkeley, USA,
\texttt{jkileel@math.berkeley.edu}

\end{document}